\documentclass[11pt]{article}
% pour les accents
\usepackage[T1]{fontenc}
\usepackage[utf8]{inputenc}
% les maths
\usepackage{amsmath,amssymb,euscript}
\usepackage{bbm}
\usepackage{amsthm}
\usepackage[english]{babel}
\theoremstyle{plain}
\setlength{\textwidth}{15cm} \setlength{\textheight}{22cm}
\setlength{\oddsidemargin}{.5cm}\setlength{\evensidemargin}{-.5cm}
\setlength{\topmargin}{-.5cm} \setlength{\abovedisplayskip}{3mm}
\setlength{\belowdisplayskip}{3mm}
\setlength{\abovedisplayshortskip}{3mm}
\setlength{\belowdisplayshortskip}{3mm}

\newtheorem{theorem}{Theorem}
\newtheorem{lemme}{Lemma}
\newtheorem{prop}{Proposition}
\newtheorem{rema}{Remark}

\usepackage{amsfonts}
\usepackage{amssymb}
\usepackage{makeidx}
\usepackage{graphicx}
\usepackage{stmaryrd}
\usepackage{caption,subcaption}
\usepackage{color}

%%%%%%%%%%%%%%%%%%%%%%%%%%%%%%%%%%%%%%%%%%%%%%%%%%%%%%%%%%%%%%%%%%%%%%%%%%%%%%
%%%%%%%%%%%% Blackboard bolds
%%%%%%%%%%%%%%%%%%%%%%%%%%%%%%%%%%%%%%%%%%%%%%%%%%%%%%%%%%%%%%%%%%%%%%%%%%%%%%

\newcommand{\bbD}{{\ensuremath{\mathbb D}} }
\newcommand{\bbE}{{\ensuremath{\mathbb E}} }

\newcommand{\bbN}{{\ensuremath{\mathbb N}} }

\newcommand{\bbP}{{\ensuremath{\mathbb P}} }

\newcommand{\bbR}{{\ensuremath{\mathbb R}} }

%%%%%%%%%%%%%%%%%%%%%%%%%%%%%%%%%%%%%%%%%%%%%%%%%%%%%%%%%%%%%%%%%%%%%%%%%%%%%%
%%%%%%%%%% Calligraphic letters
%%%%%%%%%%%%%%%%%%%%%%%%%%%%%%%%%%%%%%%%%%%%%%%%%%%%%%%%%%%%%%%%%%%%%%%%%%%%%%

\newcommand{\cL}{\ensuremath{\mathcal L}}
\newcommand{\cM}{\ensuremath{\mathcal M}}
\newcommand{\cN}{\ensuremath{\mathcal N}}

\newcommand{\cP}{\ensuremath{\mathcal P}}

\newcommand{\cR}{\ensuremath{\mathcal R}}

%%%%%%%%%%%%%%%%%%%%%%%%%%%%%%%%%%%%%%%%%%%%%%%%%%%%%%%%%%%%%%%%%%%%%%%%%%%%%%
%%%%%%%%%%%% Greek letters
%%%%%%%%%%%%%%%%%%%%%%%%%%%%%%%%%%%%%%%%%%%%%%%%%%%%%%%%%%%%%%%%%%%%%%%%%%%%%%

\newcommand{\gga}{\gamma}            % \gg already exists...

       % \ge already exists...

%%%%%%%%%%%%%%%%%%%%%%%%%%%%%%%%%%%%%%%%%%%%%%%%%%%%%%%%%%%%%%%%%%%%%%%%%%%%%%
%%%%%%%%%%%% Fonctions
%%%%%%%%%%%%%%%%%%%%%%%%%%%%%%%%%%%%%%%%%%%%%%%%%%%%%%%%%%%%%%%%%%%%%%%%%%%%%%

\newcommand{\1}[1]{\bf{1}_{#1}}							% indicatrice
\newcommand{\va}[1]{\vert #1 \vert}							% valeur abs
\newcommand{\p}[1]{\parallel #1 \parallel}					% norme

\newcommand{\E}[1]{\bbE \big[#1 \big]}						% espérance
	% norme sub
		% sup sur t

\def\w{{\wedge}}
\newcommand{\Rp}[1]{\bbR_+^{#1}}                            % R+

                           % i \in {1,...,d}
                                    % \display

                                          % r_{K_1}

                                %^K_1

				% double integrale
									% plein d'espace
\newcommand{\DT}[1]{\mathbb{D}([0,T], #1)}					% espace de Skorohod sur 0,T
			% espace de Skorohod sur R+	
		                            % sup des X^K

\def\R{{\mathbb{R}^+}}

\def\me{\medskip \noindent}
\def\bi{\bigskip \noindent}

\def\be{\begin{eqnarray}}
\def\ee{\end{eqnarray}}
\def\ben{\begin{eqnarray*}}
\def\een{\end{eqnarray*}}

\title{Large fluctuations in multi-scale modeling for rest  erythropoiesis}
\author{C\'eline Bonnet\thanks{CMAP, Ecole Polytechnique, CNRS, IP Paris,  route de
    Saclay, 91128 Palaiseau Cedex-France; E-mail: \texttt{celine.bonnet@polytechnique.edu}}, Sylvie M\'el\'eard\thanks{CMAP, Ecole Polytechnique, CNRS, IP Paris, IUF, route de
    Saclay, 91128 Palaiseau Cedex-France; E-mail: \texttt{sylvie.meleard@polytechnique.edu}}}
    
    \date{\today}
\begin{document}
	\maketitle
	\begin{abstract}
		Erythropoiesis is a mechanism for the production of red blood cells by cellular differentiation. It is based on amplification steps  due to an interplay between renewal and differentiation in the successive cell compartments  from stem cells to red blood cells.  We will study this mechanism with a stochastic point of view to explain unexpected fluctuations on the red blood cell numbers, as surprisingly observed by biologists and medical doctors in a rest erythropoiesis.
		We consider  three compartments:  stem cells, progenitors and red blood cells. The dynamics of each cell type is characterized by its division rate and by the renewal and differentiation probabilities at each division event. We model the global population dynamics by a three-dimensional stochastic decomposable  branching  process. We show that the amplification mechanism is given by the inverse of the small difference between the differentiation and renewal probabilities. Introducing a  parameter $K$ which scales simultaneously the size of the first component, the differentiation and renewal probabilities and the red blood cell death rate, we describe the asymptotic behavior of the process for large  $K$. 
		We show that each compartment has its own size scale and its own time scale. Focussing on the third component, we prove that  the red blood cell population size, conveniently renormalized (in time and size), is expanded in an usual way  inducing large fluctuations. The proofs are based on a fine study of the different scales involved in the model 
		and on the use of different convergence and average techniques in the proofs.

\end{abstract}

\textbf{Keywords} Decomposable branching process; Multi-scale approximation; Stochastic slow-fast dynamical system; Large fluctuations; Rest Erythropoiesis; Amplification mechanism.

	\section{Introduction}

 The model and the stochastic behavior we are studying in this paper are based on the biological mechanisms of rest (without stress) erythropoiesis. Erythropoiesis is a mechanism for the production of red blood cells by cellular differentiation of stem cells. Stem cells, although in large numbers, produce even more red blood cells per day using a specific amplification mechanism.
%  Cell differentiation involves in fact several types (from stem cells to red blood cells), and each compartment (defined by a cell type) has an amplified size compared to the previous one. 
 
 We will study this amplification mechanism using a decomposable branching process (see \cite{branching2010}-Section 12, \cite{kimmelbranching}-Section 6.9.1, \cite{till-culloch64}). Such process allows in particular to capture the genealogy of the cells, including the history of their types. 
 
 Let us firstly describe more precisely the biological dynamics, then we will introduce the mathematical model. The dynamics of erythropoietic cells, at rest, results in two distinct events, renewal and differentiation. Indeed, each cell of each type (except the last one) divides into two cells at a constant rate, depending on its type. These two new cells are either of the same type as the mother cell (renewal) or of the "next" cell type (differentiation). The  final stage of differentiation corresponds to  red blood cells which don't divide and can only die at a constant rate.  
The stem cells    are those with the highest capacity for renewal, but not so high to prevent the cell population to explode. Further, the amplification from one compartment (characterized by one type) to the next one is proportional to the inverse of the difference between the differentiation and renewal probabilities, which  is small. Note also that the death rate in the last compartment plays a main role. 

\me We are interested in describing  the stochastic fluctuations of the compartment sizes for the rest erythropoiesis. In this case, the regulation doesn't play any role but nevertheless one observes unusual large oscillations at the red blood cell level. Indeed, the red blood cells number, in a human rest erythropoiesis, varies by $10\%$ around its average value (cf.  \cite{thirup2003}). The order of magnitude of these variations is greater than the one of the classical variations for multi-type branching processes, which  should be of the order  $0.001\%$.

\bi In this paper, we will model the differentiation steps by considering $3$ types. These types correspond to stem cells (type $1$), progenitors (type $2$) with the ability in amplifying the cells number, and red blood cells (type $3$). The number of stem cells in the initial state will be characterized by a (large) scaling parameter $K\in \mathbb{N}^*$. 

\bi  Let us now introduce more precisely the parameters of the dynamics.

\bi  Cells of type 1 evolve according  a critical linear birth and death process. Birth events correspond to renewal division events, occurring at rate $\frac{\tau_1}{2}>0$, while death events correspond to differentiation events occurring at the same rate (a cell of type 1 divides in two cells of   type 2).  Cells of  type 2 divide at rate $\tau_2>0$ in two cells of the same type (renewal event) with probability $p_2^R$ and in two cells of type 3 (differentiation event) with probability $p_2^D = 1- p_2^R \in ]1/2,1[$. The cells of type 3 are mature cells which die at rate $d_3>0$. We can summarize the dynamics as follows. If $(N_{1}, N_{2}, N_{3})$ denotes the vector of sub-population sizes, the transitions of the hematopoietic process are given by
\begin{align*}
	N_{1}  \quad\longrightarrow & \quad N_{1}+1&& \text{at rate } (\tau_{1}/ 2)\,N_{1}\\
	(N_{1},N_{2})  \quad\longrightarrow & \quad(N_{1}-1,N_{2}+2) && \text{at rate } (\tau_{1}/ 2)\,N_{1}\\
	N_{2}  \quad\longrightarrow & \quad N_{2}+1&& \text{at rate } \tau_{2}\, p^R_{2} \,N_{2}\\
	(N_{2},N_{3}) \quad \longrightarrow &\quad (N_{2}-1,N_{3}+2)&& \text{at rate } \tau_{2}\, p^D_{2}\,N_{2}\\
	N_{3}  \quad\longrightarrow & \quad N_{3}-1&& \text{at rate } d_{3}\,N_{3}.
\end{align*}

%\begin{eqnarray*}
%	N_{1}& \longrightarrow &N_{1}+1  \quad \hbox{ at rate } (\tau_{1}/ 2)\,N_{1}\\
%	N_{1}& \longrightarrow &N_{1}-1  \quad \hbox{ at rate } (\tau_{1}/ 2)\,N_{1}\\
%	N_{2}& \longrightarrow &N_{2}+2  \quad \hbox{ at rate } (\tau_{1}/ 2)\,N_{1}\\
%	N_{2}& \longrightarrow &N_{2}+1  \quad \hbox{ at rate } \tau_{2}\, p^R_{2} \,N_{2}\\
%	N_{2}& \longrightarrow &N_{2}-1  \quad \hbox{ at rate } \tau_{2}\, p^D_{2}\,N_{2}\\
%	N_{3}& \longrightarrow &N_{3}+2  \quad \hbox{ at rate } \tau_{2}\, p^D_{2} \,N_{2}\\
%	N_{3}& \longrightarrow &N_{3}-1  \quad \hbox{ at rate } d_{3}\,N_{3}.
%\end{eqnarray*}

%\me Let us remark that differentiation events act as death events on the population process.   

\me Here, we have assumed that each division is symmetric, so that 
	\be
	\label{sym} p_2^D+p_2^R = 1.\ee
	We could have included asymmetric division without  changing the results of our study. Indeed it doesn't change the main characteristics of the dynamics.

\bi As explained above, the number of cells of each type is large, but moreover, there is an amplification mechanism between the compartments, based on the  small difference $ p_2^D-p_2^R$ between the differentiation and renewal probabilities in compartment 2, and on the small death rate $d_{3}$, in a way which is now defined.

\me   We assume that\\
$\bullet$ the size of the type $1$-cells population is of order $K$,\\
$\bullet$  there exists  a couple of positive parameters $(\gamma_2,\gamma_3) \in ]0,1[$ such that
 \be
\label{proba} p_2^D-p_2^R = K^{-\gamma_2} \quad \text{and } \quad d_3 = \tau_3 K^{-\gga_3} \quad \text{ with } \tau_3>0.
\ee
Let us note that \eqref{sym} and \eqref{proba} make  the probabilities $p^R_2$ and $p^D_2$ depend on $K$, 
$$p^D_2 = 1-p^R_2=1/2 + K^{-\gamma_2}/2.$$
Therefore the dynamics in this compartment is close to a critical process.

\me Assumptions \eqref{proba} introduce the  different time and size scales playing  a role for  the multi-scale population process  describing the dynamics of each compartment size. Hence we will denote by $N^K$, the population process N previously defined.

\me We assume in the following that 
\be
\label{gamma}\gga_2 <\gga_3<1.
\ee

%\me This case is close to the biological observations. At the same time, the case $\gga_3 <\gga_2 <1$ doesn't bring any new difficulty and can be solved using the same arguments as those detailed in the article.

\me This case is the most interesting mathematically and closest to the biological observations. Indeed, in a more realistic model with a larger number of compartments based on biological observations (see Bonnet at al \cite{notrepapier}), we observe that the red blood cell death rate drives the slowest time scale.

\bi  	Our aim in this paper is to finely describe this dynamics, when $K$ goes to infinity, using appropriate renormalizations. 
	
	  We will see that a size renormalization is not enough to describe the dynamics of the last two components of the process. A time renormalization is also necessary. More precisely each compartment has its own size scale, of order $K$ for Compartment 1,  $K^{1+\gamma_{2}}$ for Compartment $2$ (resp. $K^{1+\gamma_{2}+ \gamma_{3}}$ for Compartment $3$) and its own time scale, of order 1 for Compartment 1,  $K^{\gamma_{2}}$ for Compartment $2$ (resp. $K^{\gamma_{3}}$ for Compartment $3$). 	  
	
	   \me The next simulations show the dynamics of the process in the typical time scale of each compartment, namely $K$, $K^{\gamma_2}$ and $K^{\gamma_3}$. 
 We take as initial condition $$N^K(0) = (K,0,0)$$
and choose  $\  K = 2000, \quad \gamma_2 = 0.55, \quad \gamma_3 = 0.8.$ Hence $K^{\gga_2} \sim 60$ and $K^{\gga_3} \sim 400$.

\noindent The others parameters are equal to 1.  

Figure \ref{fig:Nt} shows the simulation of a trajectory of the process $(N^K(t), \quad t \in [0,T])$ for $T \sim 1$, decomposed on the three compartments. Figure \ref{fig:oscillation} shows the simulation of a trajectory of the process $(N^K(t), \quad t \in [0,T])$ for $T \sim K^{\gga_2}$ and Figure \ref{fig:NtK} shows the simulation of a trajectory of the process $(N^K(t), \quad t \in [0,T])$ for $T \sim K^{\gga_3}$. 
The  horizontal orange line gives the order of magnitude for each compartment size ($K$, resp. $K^{1+\gamma_{2}}$, $K^{1+\gamma_{2}+\gamma_{3}}$).

\begin{figure}[!h]
	\centering
	\begin{subfigure}[b]{0.3\textwidth} % "0.45" donne ici la largeur de l'image
		\centering \includegraphics[width=\textwidth]{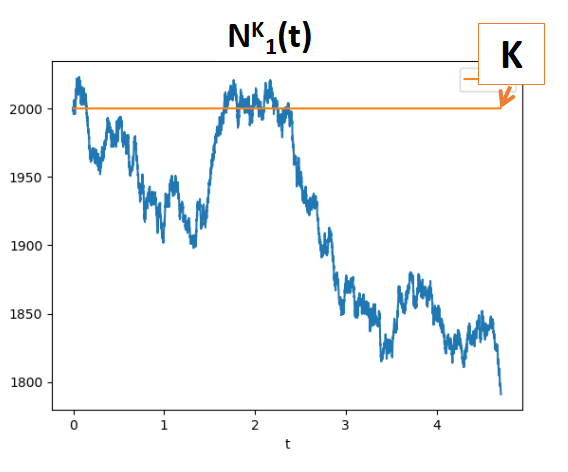}
		%	 		\caption{Orchid}\label{fig:N1t}
	\end{subfigure}
	~ % ce symbole ajoute un espacement horisontal entre les premiÃ¨res deux images
	\begin{subfigure}[b]{0.33\textwidth}
		\centering \includegraphics[width=\textwidth]{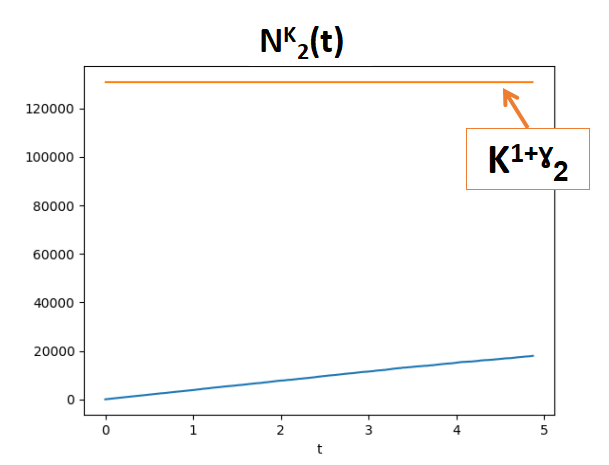}
		%	 		\caption{Calla Lily}\label{fig:N2t}
	\end{subfigure}
	~ 
	\begin{subfigure}[b]{0.31\textwidth}
		\centering \includegraphics[width=\textwidth]{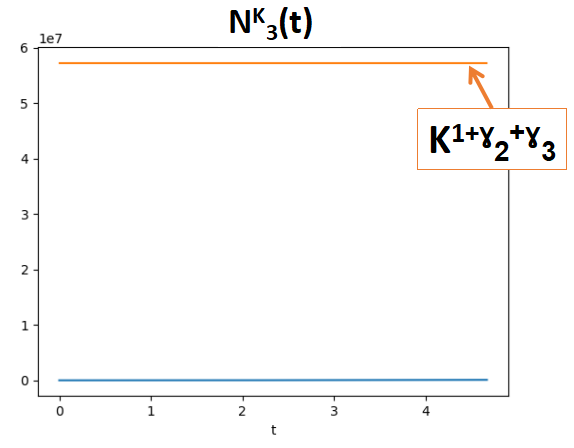}
		%	 		\caption{N3t}\label{fig:N3t}
	\end{subfigure}
	\caption{A trajectory of the $N^K$ process for $t \in[0,T]$ with $T= O(1)$}\label{fig:Nt}
\end{figure}
\begin{figure}[!h]
	\centering
	\begin{subfigure}[b]{0.3\textwidth} % "0.45" donne ici la largeur de l'image
		\centering \includegraphics[width=\textwidth]{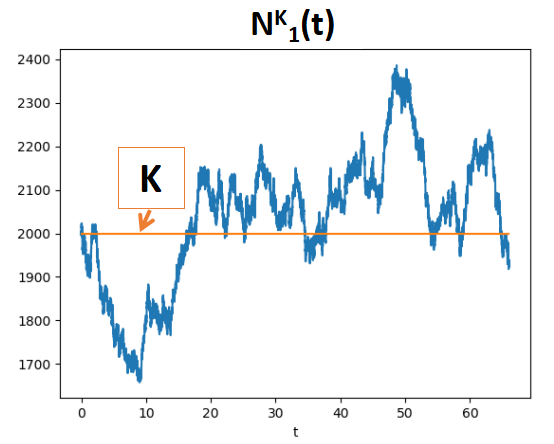}
		%	 		\caption{Orchid}\label{fig:N1t2}
	\end{subfigure}
	~ % ce symbole ajoute un espacement horisontal entre les premiÃ¨res deux images
	\begin{subfigure}[b]{0.3\textwidth}
		\centering \includegraphics[width=\textwidth]{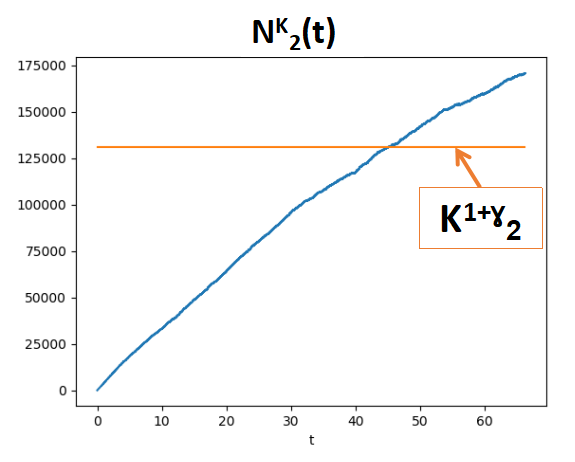}
		%	 		\caption{Calla Lily}\label{fig:N2t2}
	\end{subfigure}
	~ 
	\begin{subfigure}[b]{0.31\textwidth}
		\centering \includegraphics[width=\textwidth]{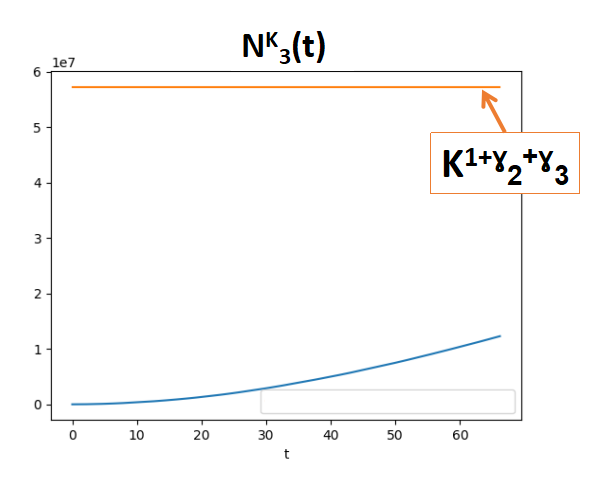}
		%	 		\caption{N3t}\label{fig:N3t2}
	\end{subfigure}
	\caption{A trajectory of the $N^K$ process for $t \in[0,T]$ with $T= O(K^{\gga_2})$}\label{fig:oscillation}
\end{figure}
\begin{figure}[!h]
	\centering
	\begin{subfigure}[b]{0.32\textwidth} % "0.45" donne ici la largeur de l'image
		\centering \includegraphics[width=\textwidth]{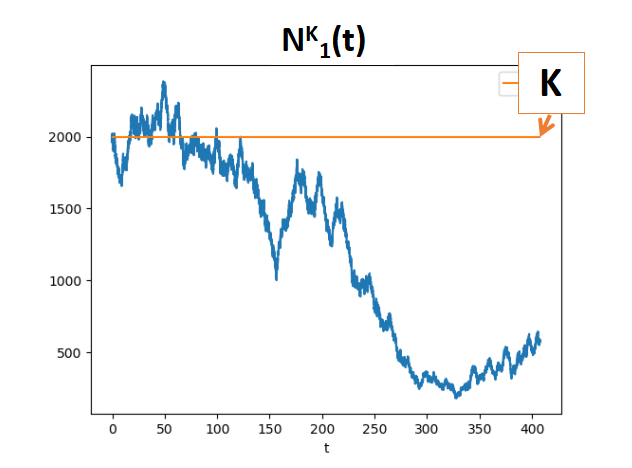}
		%	 		\caption{Orchid}\label{fig:orchid}
	\end{subfigure}
	~ % ce symbole ajoute un espacement horisontal entre les premiÃ¨res deux images
	\begin{subfigure}[b]{0.31\textwidth}
		\centering \includegraphics[width=\textwidth]{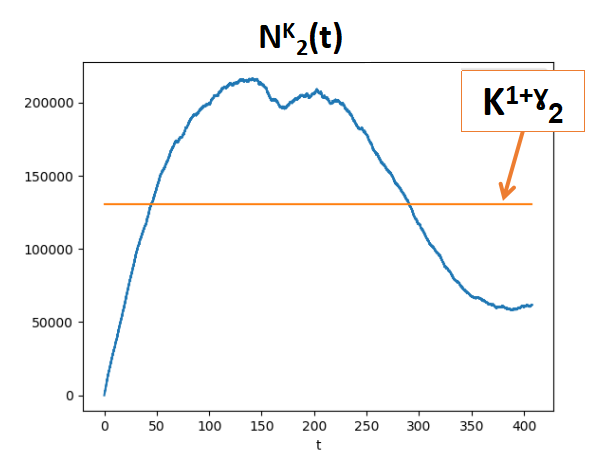}
		%	 		\caption{Calla Lily}\label{fig:calla}
	\end{subfigure}
	~ 
	\begin{subfigure}[b]{0.3\textwidth}
		\centering \includegraphics[width=\textwidth]{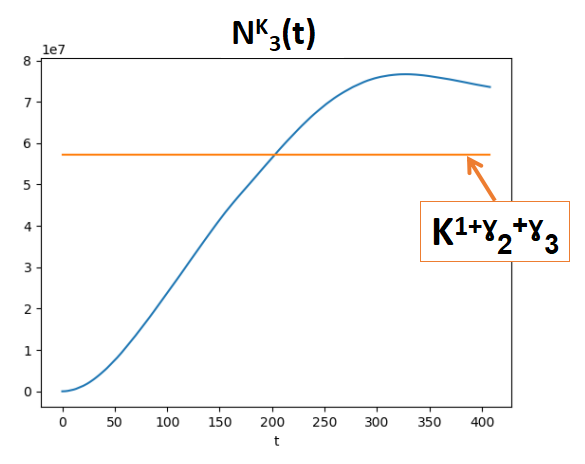}
		%	 		\caption{Rose}\label{fig:rose}
	\end{subfigure}
	\caption{A trajectory of the $N^K$ process for $t \in[0,T]$ with $T= O(K^{\gga_3})$}\label{fig:NtK}
\end{figure}

\bi We observe that at a time scale of order $1$, the  two last components of the process $N^K$ are far from their equilibrium size.
We observe in Figure 2 that the two first components of $(N^K(t), \quad t \in [0,T])$ for $T \sim K^{\gga_2}$ evolve around their equilibrium size, which is not the case of the third one. 	
In Figure 3, the process is considered on a longer period of time, $T \sim K^{\gga_3}$ and one sees that the third component hits a neighborhood of its equilibrium.
 Furthermore, we observe the oscillations of the components of $N^K$ around their equilibrium. We note that they are smoother and smoother from Compartment 1 to Compartment 3 and that the  amplitude of the waves is longer and longer.

\me Compartment 3 illustrates the particular behavior of the red blood cells in a rest human erythropoiesis, highlighted above. Indeed, the expected variations  should be around $K^{-(1+\gga_2+\gga_3)/2} \sim 0.01\%$, and we observe variations which seem to be of order  $\frac{10^7}{K^{(1+\gga_2+\gga_3)/2}} \sim 17\%$.

%Their number varies by $10\%$ around their average value (cf.  \cite{thirup2003}). The order of magnitude of these variations is much higher than that of the classical variations observed for multi-type branching processes in large populations. 

\me Our aim is to prove and quantify these different behaviors and to explain these large fluctuations.  

\me  In Section 2, basic martingale properties are stated, some estimates are given for the moments of compartments sizes and a first study on the convergence of the process at a time-scale of order $1$, when $K$ tends to infinity, is given. We show that the two last components do not reach their equilibrium at this time scale. In Section 3, we study the process on an appropriate time-scale to capture the asymptotic behavior of the second and third types. We show that the limiting behavior of the two first components process at the time scale of order $K^{\gamma_{2}}$ is given by an explicit deterministic function $y$. We also show that this time scale is not long enough to observe the dynamics of the third component. Hence, we study the limiting behavior of the process at the time scale of order $K^{\gamma_{3}}$. At this time scale, the second component process goes too fast and doesn't converge anymore. We consider the associated occupation measure, as already done in Kurtz \cite{kurtz92}. We prove its convergence in a weak sense to the Dirac measure at the unique equilibrium of the second component of the deterministic function $y$. Then we deduce the convergence of the third component to a limiting deterministic system involving this equilibrium. In Section 4, we study and quantify the large fluctuations observed in the simulations. We show that the first component  behaves at  the different time scales as a Brownian motion. This is not the case for the other two components. Theorem \ref{thU} describes the second and third order asymptotics of the second component on its typical time and size scale. The fluctuations around its deterministic limit are not Gaussian. They are described by a finite variation process integrating the randomness of the first component. An independent Brownian motion appears in the third order term. Theorem \ref{thV}, which is the main theorem of the paper, describes the fluctuations associated with the third component dynamics. We show that the randomness induced by the dynamics of the two last components is negligible. To capture the effect of the randomness of the first component imposes a size-scale which allows to observe the large oscillations of the third component. We identify these oscillations as a finite variation process integrating as above the fluctuations of the first component.   

\bi

The mathematical modeling of hematopoiesis has been firstly introduced in the seminal paper of Till, McCulloch and Siminovitch \cite{till-culloch64}. In this paper, the authors study a binary branching process and show, comparing with biological results, that the probabilistic framework is  relevant. Since this pioneering work, many mathematical approaches have been proposed to describe more precisely the cell differentiation kinetics, based either on deterministic or stochastic models (a survey concerning many models can be found in \cite{kimmel-whichard2010}). A deterministic approach consists in introducing a dynamical system describing the behavior of the different compartments and in studying different properties of this system, in particular the equilibrium states (see for example \cite{crauste08}, \cite{loe80}, \cite{marciniak09}, \cite{arinokimmel86} and the references therein). One can also add  a noise to model some random perturbation of these systems, with an eventual delay (see for example \cite{lei07}, \cite{Pa}).  In  \cite{doumic2010},  a continuous description of the different cells types is also proposed, using a partial differential equation. Moreover, in all these papers, the authors are interested in modeling the regulation which happens when the system is perturbed by some stress and this nonlinearity involves many mathematical difficulties. 
 Let us note that  other stochastic models for hematopoiesis have been introduced  (\cite{abk00}, \cite{roe02},  \cite{kimmel82}) but they concentrate on a specific level (either stem cells or red blood cells). In all this literature, the questions studied by the authors don't concern the impact of  the  parameters on the amplification mechanism.  We have found only two papers,
 \cite{Din07} and \cite{marciniak09}, in which the question is mentioned. To our knowledge, the fluctuations generated by this amplification mechanism, have never been rigorously studied with such a space-time multiscale point of view.

Most of the slow fast dynamical systems model interaction between species with different  behaviors driving the time scales (see for example \cite{pelevs2006reduction}). In such cases, slow and fast components appear naturally, contrary to our case, for which a fine study is needed  to find the specific time scale of each compartment.
In the other way, Popovic, Kurtz and Kang in \cite{lea2014} have developed a general theorem for convergence and fluctuations of multiscale processes. Their result can't explain our asymptotics. Indeed, in their result, the fluctuations around the deterministic behavior of the slow component are Gaussian, which is not the case of the red blood cells dynamics previously described. In their work, the martingale part of the limit is due to two sources of randomness: the slow component dynamics and the averaged effects of the fast components  on the slow component dynamics. As  previously explained, in our case the randomness of the slow component is only due to the fast ones and its intrinsic randomness is negligible.

\bi \textit{Notation.}
 $\cP(E)$ and $\cL(X)$ will denote respectively the space of probability measures on $E$ and the law of a process $X$.
As in \cite{kurtz92}, we will denote by $l_m(\Rp{})$ the space of measures on $[0,\infty) \times \Rp{} $ such that $\mu([0,t]\times \Rp{})=t $, for each $t \geq 0$. \\

\section{Amplification mechanism : size-scale dynamics}
\subsection{The amplification mechanism}

As explained in the introduction, we will reduce the model by simplicity  to three compartments. The first one will describe the stem cells compartment, the second one will describe the compartment of progenitors and the third one will refer to red blood cells. Also by simplicity, we will describe the type of cells in each compartment by type 1, type 2 and  type 3.

\bi Let us now introduce  the vector $N^K(t)= (N_1^K(t), N_2^K(t), N_3^K(t))$ of population sizes  at time $t$. The  process $N^K$ is a decomposable multi-type branching process, that is a Markov jump  process  whose  dynamics is given by the following equations. 

\bi  We assume that for any fixed $K$, $N^K_{1}(0), N^K_{2}(0), N^K_{3}(0)$ are  integrable.

\me Let us denote by $(\cN_i^{j})_{\underset{j\in\{-,+\}}{1\leq i\leq 3}}$ independent Poisson point measures with intensity $dsdu$ on $\Rp{2}$ and introduce the filtration $(\mathcal{F}_t)_{t \geq 0}$ given by $$\mathcal{F}_t = \sigma(\cN_i^{j}([0,s) \times A); \, i \in \{1,\dots, 3\},j\in\{-,+\}, \,s \leq t, \, A \in \mathcal{B}(\R) ).$$ 

Then we have
\begin{equation} \label{IDS}
\begin{split}
N^K_1(t) =\, N_1^K(0)& + \int_{0}^{t}\int_{\Rp{}} \1{u \leq \frac{\tau_1}{2}\,N_1^K(s^-)} \,\, \cN^+_1(ds, du) - \int_{0}^{t}\int_{\Rp{}} \1{u \leq \frac{\tau_1}{2}\,N_1^K(s^-)} \,\, \cN^-_1(ds, du) \\
N^K_2(t) =\,  N^K_2(0)& + 2 \, \int_{0}^{t}\int_{\Rp{}} \1{u \leq \frac{\tau_1}{2} \,N^K_1(s^-)} \,\, \cN^-_1(ds, du)  + \int_{0}^{t}\int_{\Rp{}} \1{u \leq \tau_2 p_2^R \, N^K_2(s^-)} \,\, \cN^+_2(ds, du) \\
&- \int_{0}^{t}\int_{\Rp{}} \1{u \leq \tau_2 p_2^D \,N^K_2(s^-)} \,\, \cN^-_2(ds, du) \\
N^K_3(t) =\, N^K_3(0)& + 2 \, \int_{0}^{t}\int_{\Rp{}} \1{u \leq \tau_2 p_2^D \,N^K_2(s^-)} \,\, \cN^-_2(ds, du)  - \int_{0}^{t}\int_{\Rp{}} \1{u \leq \tau_3 K^{-\gamma_3} \,N^K_3(s^-)} \,\, \cN^-_3(ds, du) 
\end{split}.
\end{equation}

It can be written as
\begin{equation} \label{semi}
\begin{split}
\forall t\geq 0, \quad N^K_1(t) &=\, N^K_1(0) + M^K_1(t) \\
N^K_2(t) &=\, N^K_2(0) + \tau_1\,\int_{0}^{t}  N^K_1(s) \, ds - \tau_2 \,K^{-\gamma_2} \, \int_{0}^{t}  N^K_2(s) \, ds + M^K_2(t) \\
N^K_3(t) &=\, N^K_3(0) + 2\, \tau_2 \,p^D_2 \,\int_{0}^{t} N^K_2(s) \, ds - \tau_3\, K^{-\gamma_3} \,\int_{0}^{t}  N^K_3(s) \,ds + M^K_3(t)
\end{split}
\end{equation}
where $M^K = (M^K_1,M^K_2,M^K_3)$ is a square-integrable martingale such that for all $t\geq0$,
\begin{equation} \label{crochet}
\begin{split}
<M^K_1>_t &=\, \tau_1\,\int_{0}^{t}  N^K_1(s) \,ds \\
<M^K_2>_t &=\,2\, \tau_1\, \int_{0}^{t}  N^K_1(s) \, ds + \tau_2 \, \int_{0}^{t}  N^K_{2}(s) \, ds \\
<M^K_3>_t &=\,4\, p^D_2 \tau_2 \, \int_{0}^{t}  N^K_2(s) \, ds +  \tau_3 K^{-\gga_3} \,  \int_{0}^{t} N^K_3(s) \,ds \\
<M^K_1,M^K_2>_t &=\, -\,\tau_1\, \int_{0}^{t}  N^K_1(s) \, ds \\
<M^K_2,M^K_3>_t &=\, -2\,p^D_2 \tau_2 \, \int_{0}^{t}  N^K_2(s) \, ds.
\end{split}
\end{equation}

Indeed, by standard localization and Gronwall's arguments applied to $(N^K_1(t))_{t}$,  we can easily prove that  for any $T>0$ and $K\in \mathbb{N}^*$,
\be
\label{controlN1}
\E{\sup_{t\leq T} N^K_{1}(t)} \leq (2 +\E{N^K_{1}(0)} )e^{2\tau_{1} T},
\ee
and then that \be
\E{\sup_{t\leq T} N^K_{2}(t)} <+\infty\ ; \ \E{\sup_{t\leq T} N^K_{3}(t)} <+\infty.
\ee

\bi
	  We obtain  from \eqref{semi} that the function $t \mapsto n(t) = \E{N^K(t)}=(n_1(t),n_2(t),n_3(t))$ satisfies the system of equations
	\begin{align} \label{expectation}
	 \forall t\leq T, \begin{cases}
	\quad n_1(t) &=\, \E{N^K_1(0)} \\
	\frac{d}{dt}n_2(t) &=\,  \tau_1\, n_1(t) -  \tau_2 K^{-\gga_2} \, n_2(t)\\
	\frac{d}{dt} n_3(t) &=\, 2 \tau_2 p_2^D \, n_2(t) - \tau_3 K^{-\gga_3} \, n_3(t).\end{cases}
	\end{align}

\bi	As explained in the introduction, \be
\label{n1}
\E{N_1^K(0)} \sim K.
\ee

Therefore there is a unique equilibrium given by
	\begin{align}\label{magnitude1}
	\forall t\geq 0, \quad n_1^* &=\, \E{ N^K_1(0) } \sim K\nonumber\\
	n_2^* &=\,  \frac{\tau_1\,n_1^* }{ \tau_2} K^{\gga_2} \sim K^{1+\gga_2}\nonumber\\
	n_3^* &=\, \frac{2 p^D_2 \tau_2  \, n_2^* }{ \tau_3 }K^{\gga_3}\sim K^{1+\gga_2+\gga_3}
	\end{align}

\begin{rema}
	In the above  computation, we obtain the order of magnitude of each sub-population size at equilibrium, but we cannot deduce the order of magnitude of the time taken by the process to reach this equilibrium.
	We will keep this remark in mind in all the paper.
\end{rema}

\bi
Let us first begin by a lemma showing that for any $K$ and $t$, the expectations of the sub-population sizes behave as expected from  \eqref{magnitude1}.

\begin{lemme} \label{preliminary}
	Let us now assume that  $$ \displaystyle \sup_{K} \E{\frac{N^K_1(0)}{K}} < +\infty, \quad \sup_{K} \E{\frac{N^K_2(0)}{K^{1+\gga_2}}} <+ \infty, \quad \sup_{K} \E{\frac{N^K_3(0)}{K^{1+\gga_2+\gga_3}}} <+ \infty.$$ 
	then $$\displaystyle \sup_{K,\, t \in \Rp{}} \E{\frac{N^K_1(t)}{K}} < + \infty\ , \displaystyle \sup_{K,\, t \in \Rp{}} \E{\frac{N^K_2(t)}{K^{1+\gga_2}}} < + \infty\ , \ \displaystyle \sup_{K,\, t \in \Rp{}} \E{\frac{N^K_3(t)}{K^{1+\gga_2+\gga_{3}}}} < +\infty.$$  
\end{lemme}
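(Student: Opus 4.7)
The system \eqref{expectation} is a cascade of linear ODEs for the deterministic functions $n_i(t) = \mathbb{E}[N^K_i(t)]$, whose explicit solutions can be written down by variation of constants. My plan is simply to compute these solutions in closed form and check that, with the scaling hypotheses on the initial conditions, the three resulting bounds have the correct orders of magnitude $K$, $K^{1+\gamma_2}$ and $K^{1+\gamma_2+\gamma_3}$ respectively, uniformly in $t$.

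For the first coordinate, \eqref{expectation} immediately gives $n_1(t) = \mathbb{E}[N^K_1(0)]$ for every $t$, and the hypothesis $\sup_K \mathbb{E}[N^K_1(0)/K] < \infty$ settles the first bound. For the second coordinate, the ODE is linear with constant source $\tau_1 n_1(0)$ and decay rate $\tau_2 K^{-\gamma_2}$, so
\begin{equation*}
n_2(t) \;=\; n_2(0)\, e^{-\tau_2 K^{-\gamma_2} t} + \frac{\tau_1\, n_1(0)\, K^{\gamma_2}}{\tau_2}\bigl(1 - e^{-\tau_2 K^{-\gamma_2} t}\bigr).
\end{equation*}
Bounding both exponentials crudely by $1$ and dividing by $K^{1+\gamma_2}$ yields
\begin{equation*}
\frac{n_2(t)}{K^{1+\gamma_2}} \;\leq\; \frac{n_2(0)}{K^{1+\gamma_2}} + \frac{\tau_1}{\tau_2}\cdot\frac{n_1(0)}{K},
\end{equation*}
which after taking expectations is uniformly bounded in $K$ and $t$ by the two first assumptions of the lemma.

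For the third coordinate, the same method gives
\begin{equation*}
n_3(t) \;=\; n_3(0)\, e^{-\tau_3 K^{-\gamma_3} t} + 2\tau_2\, p_2^D \int_0^t n_2(s)\, e^{-\tau_3 K^{-\gamma_3}(t-s)}\, ds.
\end{equation*}
Plugging in the uniform (in $s$) bound on $n_2$ obtained above and using $\int_0^t e^{-\tau_3 K^{-\gamma_3}(t-s)}\,ds \leq K^{\gamma_3}/\tau_3$, the contribution of the initial data for compartments $2$ and $3$ is absorbed into the $K^{1+\gamma_2+\gamma_3}$ normalization, while the forcing term contributes a factor $K^{\gamma_2}\cdot K^{\gamma_3}/K^{1+\gamma_2+\gamma_3} = K^{-1}$ multiplied by $n_1(0)$. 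Taking expectations and using the third scaling assumption closes the argument.

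There is no real obstacle here: the ODE system is triangular and explicitly solvable, and the non-trivial point is purely algebraic, namely that the two cancellations $K^{\gamma_2}\cdot K = K^{1+\gamma_2}$ and $K^{\gamma_3}\cdot K^{1+\gamma_2} = K^{1+\gamma_2+\gamma_3}$ match exactly the target normalizations. The lemma is therefore really a sanity check that the orders of magnitude in \eqref{magnitude1} propagate to arbitrary $t\geq 0$, which will be used freely in the sequel.
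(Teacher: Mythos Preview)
Your argument is correct and follows essentially the same route as the paper: both solve the triangular linear ODE system \eqref{expectation} explicitly and read off the uniform bounds. The only cosmetic difference is that for the third component you bound the Duhamel integral directly using the already-established uniform control on $n_2$, whereas the paper writes out the full closed-form expression for $n_3(t)$ (with an explicit constant $\beta_K$); your shortcut is slightly more economical and in fact does not even require the standing assumption $\gamma_2<\gamma_3$.
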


\begin{proof}
	
	The first assertion follows immediately.
	
	\bi 	From \eqref{expectation}, we obtain that for all $t \geq 0$, \be \label{EN2} \E{\frac{N^K_2(t)}{K^{1+\gga_2}} }= \frac{ \tau_1}{\tau_{2}}\, \E{\frac{N^K_1(0)}{K}} + \big( \E{\frac{N^K_2(0)}{K^{1+\gga_2}}} - \frac{ \tau_1}{\tau_2}  \E{\frac{N^K_1(0)}{K}}\big) \, e^{-\tau_2\,K^{-\gga_2} \, t} \ee
	and the proof of the second assertion  follows.
	
	\me Similarly, straightforward computation yields
	\begin{align} \label{EN3}
	 \E{\frac{N^K_3(t)}{K^{1+\gga_2+\gga_{3}}} } = \;& \frac{2 p^D_{2} \tau_1}{\tau_{3}}\,\E{\frac{N^K_1(0)}{K}} -  \beta_K \, e^{-\tau_2\,K^{-\gga_2} \, t} \\
	& +\Big( \E{\frac{N^K_3(0)}{K^{1+\gga_2+\gga_{3}}}} - \frac{2 p^D_{2} \tau_1}{\tau_{3}}\,\E{\frac{N^K_1(0)}{K}} +\beta_K \Big) \, e^{-\tau_3\,K^{-\gga_3} \, t},\nonumber
	\end{align}
	with $$\beta_K = {2 p^D_{2} \tau_2}\, \frac{1}{\tau_{2}K^{\gga_3 - \gga_2}-\tau_{3}} \Big( \E{\frac{N^K_2(0)}{K^{1+\gga_2}}} - \frac{ \tau_1}{\tau_2}  \E{\frac{N^K_1(0)}{K}}\Big).$$
	Hence the third assertion is proved. 
	
\end{proof}

\subsection{Asymptotic behavior on a finite time interval}

The parameter $K$ is defined as the order of magnitude of  the martingale $N^K_{1}$ at time $0$.

The  first result  describes the dynamics of the process on a finite time interval. The proof of this proposition is  left to the reader. It is classical and more difficult proofs in a similar spirit will be given later. 

\begin{prop} \label{thX}	 
	Let us introduce the jump process $X^K$ defined  for all $t \geq0$ by
	\be
	\label{X}
	X^K(t)=(\frac{N^K_1(t )}{K}, \frac{N^K_2(t)}{K^{1+\gga_2}}, \frac{N^K_3(t)}{K^{1+\gga_2+\gamma_{3}}}). 
	\ee
	(i)  Let us assume that there exists a vector $(x_1,0,0) \in \Rp{3}$ such that the sequence \\$\Big(\frac{N^K_1(0 )}{K}, \frac{N^K_2(0)}{K^{1+\gga_2}}, \frac{N^K_3(0)}{K^{1+\gga_2+\gamma_{3}}}\Big) _{K \in \bbN^*}$ converges in law to $(x_1,0,0)$ when $K$ tends to infinity and such  that 
	$$\displaystyle \sup_{K} \E{ \frac{N^K_1(0 )}{K}} < +\infty, \quad \sup_{K} \E{ \frac{N^K_2(0)}{K^{1+\gga_2}}} < +\infty \quad \text{ and } \quad \sup_{K} \E{ \frac{N^K_3(0)}{K^{1+\gga_2+\gamma_{3}}}} < +\infty.$$
	
	Then
	for all $T>0$, the sequence $(\frac{N^K_1(t )}{K}, \frac{N^K_2(t)}{K^{1+\gga_2}}, \frac{N^K_3(t)}{K^{1+\gga_2+\gamma_{3}}})_{K \in \bbN^*}$ converges in law in $\mathbb{D}([0,T],\mathbb{R}_{+}^3) $ to $(x_1, 0,0)$.
	
	\bi(ii)  Let us assume that there exists a vector $(x_1,0,0) \in \Rp{3}$ such that the sequence \\$\Big(\frac{N^K_1(0 )}{K},  \frac{N^K_2(0)}{K},\frac{N^K_3(0)}{K}\Big) _{K \in \bbN^*}$ converges in law to $(x_1 ,0,0)$ when $K$ tends to infinity and such  that 
	$$\displaystyle \sup_{K} \E{ \frac{N^K_1(0 )}{K}} < +\infty, \quad \sup_{K} \E{ \frac{N^K_2(0)}{K}} < +\infty \quad \text{ and } \quad \sup_{K} \E{ \frac{N^K_3(0)}{K}} < +\infty.$$
	
	Then
	for all $T>0$, the sequence $\Big(\frac{N^K_1(t )}{K},  \frac{N^K_2(t)}{K},\frac{N^K_3(t)}{K}\Big)_{K \in \bbN^*}$ converges in law in  $\mathbb{D}([0,T],\mathbb{R}_{+}^3) $ to $\ x_1\big(1, \tau_{1}\, t ,  {\tau_{2}\over 2}\;  
	t^2\big)$.
	
\end{prop}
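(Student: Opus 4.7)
The proof proceeds component by component from the semimartingale decomposition \eqref{semi} and the bracket formulas \eqref{crochet}: each martingale part is controlled by Doob's $L^2$ inequality, and the drift terms are handled with Lemma \ref{preliminary} together with the explicit first-moment equations \eqref{expectation}. Since all candidate limits are deterministic and continuous, it suffices to prove uniform convergence in probability on $[0,T]$, which entails convergence in law in $\DT{\Rp{3}}$. The first coordinate is the same in both cases: $N^K_1(t)/K = N^K_1(0)/K + M^K_1(t)/K$; \eqref{crochet} together with \eqref{controlN1} yields $\E{\langle M^K_1\rangle_T}/K^2 = O(1/K)$, so Doob gives $\supt{T}|M^K_1(t)/K|\to 0$ in $L^2$, and combined with the convergence of the initial condition this provides $N^K_1/K \to x_1$ uniformly on $[0,T]$.

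\textbf{Part (i).} Dividing the second line of \eqref{semi} by $K^{1+\gga_2}$ and the third by $K^{1+\gga_2+\gga_3}$, the initial values vanish by assumption and the two drift terms pick up the prefactors $K^{-\gga_2}$, resp.\ $K^{-\gga_3}$, and stay bounded thanks to Lemma \ref{preliminary}. Using \eqref{crochet} and the same lemma, one checks
$$\frac{\E{\langle M^K_2\rangle_T}}{K^{2+2\gga_2}} = O(K^{-1-\gga_2}),\qquad \frac{\E{\langle M^K_3\rangle_T}}{K^{2+2\gga_2+2\gga_3}} = O(K^{-1-\gga_2-\gga_3}),$$
so Doob's inequality provides the uniform convergence of $\bigl(N^K_2/K^{1+\gga_2},\, N^K_3/K^{1+\gga_2+\gga_3}\bigr)$ to $(0,0)$.

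\textbf{Part (ii).} The normalization by $K$ is coarser than the equilibrium sizes, so Lemma \ref{preliminary} by itself yields only weak bounds. On the finite horizon $[0,T]$ however, the explicit formula \eqref{EN2} and a Gronwall argument show that $\supt{T}\E{N^K_2(t)/K}$ and $\supt{T}\E{N^K_3(t)/K}$ are bounded uniformly in $K$, so \eqref{crochet} gives $\E{\langle M^K_2\rangle_T}/K^2 = O(1/K)$ and $\E{\langle M^K_3\rangle_T}/K^2 = O(1/K)$, and Doob's inequality forces both martingale parts to vanish uniformly in $L^2$. The drifts $\tau_2 K^{-\gga_2}\!\int\! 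N^K_2/K\,ds$ and $\tau_3 K^{-\gga_3}\!\int\! N^K_3/K\,ds$ are then negligible; the uniform convergence $N^K_1/K\to x_1$ provides $\tau_1\!\int_0^t N^K_1(s)/K\,ds \to \tau_1 x_1 t$, identifying the limit of $N^K_2/K$; and injecting this, together with $2 p^D_2\to 1$, into the equation for $N^K_3/K$ identifies its limit as well. The main obstacle is precisely this sharpening of Lemma \ref{preliminary} on $[0,T]$: the lemma is tight only on the long equilibrium time scales, so the bounds needed on $[0,T]$ must be extracted directly from \eqref{expectation} by successive Gronwall estimates. All remaining steps are standard martingale control.
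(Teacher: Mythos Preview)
Your proposal is correct and follows precisely the approach the paper has in mind: the paper explicitly omits this proof, calling it ``classical'' and deferring to the later (harder) proofs of Theorems~\ref{thY} and~\ref{thZ}, which proceed exactly as you do---semimartingale decomposition~\eqref{semi}, martingale control via Doob and~\eqref{crochet}, and moment bounds from Lemma~\ref{preliminary} or a direct Gronwall argument on~\eqref{expectation}. Your observation in Part~(ii) that Lemma~\ref{preliminary} must be sharpened on the short horizon $[0,T]$ (by dropping the negative drift term in~\eqref{expectation} rather than solving it at equilibrium) is exactly the right point, and is the only place where the argument is not a verbatim copy of the later proofs.
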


\bi 
Let us underline that  at this time scale,  assertion (i) shows that the two last components do not reach their equilibrium order, as observed in the simulations. Assertion (ii) proves that the three compartments are only of order $K$ during the time interval $[0,T]$. 

%"""""""""""""""""""
%
%\bi One can also easily prove that 	the first component behaves as a Brownian motion at the second order: for large $K$, for all $t\in[0,T]$,
%$$N^K_1(t ) \sim x_1\,K + \sqrt{K}\,\sqrt{\tau_{1} x_1 } \, B_{t}	,$$
%which explains the simulations of Figure 1. 
%"""""""""""""""""""

\section{Size-time multi-scale dynamics and asymptotic behavior}
A size renormalization of the stochastic process is not enough to understand the dynamics of the model. We need to change the time scale as observed in the simulations. This part is devoted to identify and study two significant asymptotics for the process, corresponding to the two time scales $K^{\gamma_{2}}$ and $K^{\gamma_{3}}$, when $K$ is large. 

\subsection{Asymptotic behavior at a time-scale of order $K^{\gamma_{2}}$}

    In this section, we  study the system composed of the two first components at the time scale $K^{\gga_{2}}$. To this end, let us introduce the jump process $Y^K$ defined  for all $t \geq0$ by
\be
\label{Y}Y^K(t)=(\frac{N^K_1(t \, K^{\gga_2})}{K}, \frac{N^K_2(t\, K^{\gga_2})}{K^{1+\gga_2}}). \ee

%Note that at time $t=0$, we have $$(Y_1^K(0),Y_2^K(0))=(X_1^K(0),X_2^K(0)).$$
Let us note that only the time scale differs between processes $X^K$ and $Y^K$. Hence, at time $t=0$, we have $$(Y_1^K(0),Y_2^K(0))=(X_1^K(0),X_2^K(0)).$$

\bi The next theorem  describes the approximating behavior of $Y^K$ when $K$ tends to infinity.  
\begin{theorem} \label{thY}		
	
	Assume that there exists a vector $(x_1,x_2) \in \Rp{2}$ such that the sequence $(Y^K(0))_{K \in \bbN^*}$ converges in law to $(x_1,x_2)$ when $K$ tends to infinity and such that $$\displaystyle \sup_{K} \E{ Y^K_1(0)^2 +  Y^K_2(0)^2} < \infty.$$
	Then for each $T>0$, the sequence $(Y^K)_{K \in \bbN^*} $ converges in law (and hence in probability) in $\bbD([0,T], \Rp{2})$ to the continuous function $y=(y_1,y_2) $ such that for all $t\geq0$,
	\begin{equation} \label{y}
	\left\lbrace \begin{array}{lll}
	y_1(t)&= x_1 \\
	y_2(t)&= \frac{\tau_1 x_1}{\tau_2} + \big( x_2 - \frac{\tau_1 x_1}{\tau_2} \big) \, e^{-\tau_2\,t}.
	\end{array}\right.
	\end{equation}
\end{theorem}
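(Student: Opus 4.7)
The plan is to derive \eqref{y} as a law-of-large-numbers limit from the renormalized semi-martingale decomposition of $Y^K$. Setting
$$\tilde M_1^K(t)=\frac{M_1^K(tK^{\gga_2})}{K},\qquad \tilde M_2^K(t)=\frac{M_2^K(tK^{\gga_2})}{K^{1+\gga_2}},$$
an elementary change of variable in \eqref{semi} evaluated at time $tK^{\gga_2}$ yields
\begin{equation*}
\begin{split}
Y_1^K(t) &= Y_1^K(0) + \tilde M_1^K(t),\\
Y_2^K(t) &= Y_2^K(0) + \tau_1\int_0^t Y_1^K(s)\,ds - \tau_2\int_0^t Y_2^K(s)\,ds + \tilde M_2^K(t).
\end{split}
\end{equation*}

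First I would show that $\tilde M_1^K$ and $\tilde M_2^K$ vanish uniformly on $[0,T]$ in $L^2$. Time-changing the brackets \eqref{crochet}, dividing by the appropriate powers of $K$, and applying Lemma \ref{preliminary}, one obtains
$$\E{\langle\tilde M_1^K\rangle_T}=O(K^{\gga_2-1}),\qquad \E{\langle\tilde M_2^K\rangle_T}=O(K^{-1})+O(K^{-1-\gga_2}),$$
both of which tend to $0$ thanks to $\gga_2<1$. Doob's $L^2$-inequality then gives $\E{\sup_{t\leq T}|\tilde M_i^K(t)|^2}\to 0$ for $i=1,2$. Next I would establish tightness of $(Y^K)_K$ in $\bbD([0,T],\Rp{2})$ via the Aldous--Rebolledo criterion: compact containment follows from Lemma \ref{preliminary}, while the modulus-of-continuity condition at stopping times is handled by the bracket estimates above for the martingale parts and by $\sup_{s\leq T}\E{Y_i^K(s)}<\infty$ for the finite-variation drift parts.

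Any accumulation point $y$ is necessarily continuous since the jumps of $Y^K$ have amplitude at most $2/K$ and $2/K^{1+\gga_2}$. Passing to the limit in the decomposition displayed above, one gets $y_1\equiv x_1$ and the linear integral equation $y_2(t)=x_2+\tau_1 x_1\,t-\tau_2\int_0^t y_2(s)\,ds$, whose unique solution is precisely \eqref{y}. Uniqueness of the limit identifies the full sequence; as the limit is deterministic, convergence in law automatically strengthens into convergence in probability for the uniform topology on $[0,T]$.

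The only conceptually delicate point is the role of the assumption $\gga_2<1$ from \eqref{gamma}: it enters exactly at the level of $\langle\tilde M_1^K\rangle_T$, ensuring that the intrinsic critical-branching fluctuations of $N_1^K$ (which would otherwise produce a non-trivial Feller-diffusive contribution in the limit if $\gga_2=1$) remain negligible on the accelerated time scale $K^{\gga_2}$. Beyond this, all the steps follow the standard Kurtz-type averaging template and are routine given Lemma \ref{preliminary} and \eqref{crochet}.
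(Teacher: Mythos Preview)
Your proposal is correct and follows essentially the same route as the paper: semimartingale decomposition of $Y^K$ at the time scale $K^{\gga_2}$, vanishing of the martingale brackets (using $\gga_2<1$), Aldous--Rebolledo tightness, and identification of the unique deterministic limit. The only cosmetic difference is that the paper first establishes the uniform second-moment bound $\sup_K\E{\sup_{t\le T}(Y_1^K(t)^2+Y_2^K(t)^2)}<\infty$ and then invokes uniform integrability of the functional $\phi_t(Y^K)$ for the identification step, whereas you appeal to Lemma~\ref{preliminary} (first moments) for the bracket estimates and pass to the limit directly via the continuous mapping theorem; both variants are standard and lead to the same conclusion.
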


\begin{proof}
	By standard localization argument, use of Gronwall's Lemma and Doob's inequality, we easily prove (successively for the first and then for the second component) that for any $T>0$, 
	\be 
	\label{momentY}\displaystyle \sup_{K} \E{\sup_{t\in[0,T]} (Y^K_1(t)^2 +  Y^K_2(t)^2)} < \infty.\ee
	From \eqref{semi} and \eqref{crochet}, we can write
	\be
	\label{YK}
	Y^K_{1}(t)&=& Y^K_{1}(0) + \widehat M^K_{1}(t)\nonumber\\
	Y^K_{2}(t)&=& Y^K_{2}(0) + \tau_{1} \int_{0}^t Y^K_{1}(u) du -\tau_{2}   \int_{0}^t Y^K_{2}(u) du + \widehat M^K_{2}(t),
	\ee
	where $\widehat  M^K_{1}$ and $\widehat  M^K_{2}$ are two square-integrable martingales satisfying
	\be
	\label{MYK}
	\langle \widehat M^K_{1} \rangle_{t} &=& {\tau_{1}\over K^{1-\gamma_{2}}} \int_{0}^t Y^K_{1}(u) du,\nonumber\\
	\langle \widehat M^K_{2} \rangle_{t} &=& {2\tau_{1}\over K^{1+\gamma_{2}}} \int_{0}^t Y^K_{1}(u) du +{\tau_{2}\over K} \int_{0}^t Y^K_{2}(u) du,\nonumber\\
	\langle \widehat M^K_{1}, \widehat M^K_{2}  \rangle_{t} &=& - {\tau_{1}\over K} \int_{0}^t Y^K_{1}(u) du.
	\ee
	It is very standard to prove that the sequence  of laws of $(Y^K)$ is tight (using the moment estimates \eqref{momentY})  and that the martingale parts go to $0$. The result follows  using the method summarized for example  in \cite{CoursMeleardVincent}. Each limiting value is  proved to only charge   the subset of continuous functions. Then introducing
	$$ \phi_t(y)=  \left(\begin{array}{c}
	y_1(t) - y_1(0) \\  y_2(t) - y_2(0) - \int_0^t \big(\tau_1 y_1(s)- \tau_2 y_2(s)\big) ds   \end{array} \right)$$
	and using the uniform integrability of the sequence $(\phi_t(Y^K))_K$, deduced from \eqref{momentY},  we identify the limit as the unique continuous solution $y$  of the  deterministic system defined by $y(0)=(x_1,x_2)$ and 
	\begin{align*}
	\forall t \geq 0, \quad \phi_t(y)=0.
	\end{align*}
	That concludes the proof.
\end{proof}

\bi 
\begin{rema} Since $\gga_2<\gga_3$, the time scale $K^{\gga_2}$ is not large enough to observe the dynamics of the third component. 
	The next proposition shows  that at such a time scale, the third component converges to a trivial value.
\end{rema}

\begin{prop}
	Under the same hypotheses as in Theorem \ref{thY}, we assume  furthermore  that there exists $x_3 \in \Rp{}$ such that the sequence $(\frac{N^K_3(0)}{K^{1+\gga_2+\gga_{3}}})_{K \in \bbN^*}$ converges in law to $x_3$ when $K$ tends to infinity and such  that $$ \quad \sup_{K} \E{(\frac{N^K_3(0)}{K^{1+\gga_2+\gga_{3}}})^2} < \infty.$$ Then for each $T>0$, the sequence $(\frac{N^K_3(.K^{\gamma_2})}{K^{1+\gga_2+\gga_{3}}})_{K \in \bbN^*} $ converges  in probability in $\bbD([0,T], \Rp{})$ to $x_3$.
\end{prop}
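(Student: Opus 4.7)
\bigskip\noindent
\textbf{Proof plan.} The strategy is to write a semimartingale decomposition for the renormalized process
$$Z^K(t) := \frac{N^K_3(t\, K^{\gga_2})}{K^{1+\gga_2+\gga_3}},$$
and show that both the drift and the martingale part vanish uniformly on $[0,T]$ in probability, so that $Z^K$ stays at its (random) starting point $Z^K(0)$, which converges in law to the constant $x_3$.

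\medskip\noindent
From \eqref{semi} applied at time $t\, K^{\gga_2}$, a change of time variable $s = u\, K^{\gga_2}$ in the integrals, and using $N^K_2(u K^{\gga_2}) = K^{1+\gga_2} Y^K_2(u)$ and $N^K_3(u K^{\gga_2}) = K^{1+\gga_2+\gga_3} Z^K(u)$, I would obtain the identity
\begin{equation*}
Z^K(t) = Z^K(0) + 2\tau_2 p_2^D\, K^{\gga_2-\gga_3} \int_0^t Y^K_2(u)\, du - \tau_3\, K^{\gga_2-\gga_3} \int_0^t Z^K(u)\, du + \widetilde M^K_3(t),
\end{equation*}
where $\widetilde M^K_3(t) := K^{-(1+\gga_2+\gga_3)} M^K_3(t K^{\gga_2})$ is a square-integrable martingale whose bracket, from \eqref{crochet} and the same time change, satisfies
\begin{equation*}
\langle \widetilde M^K_3 \rangle_t = \frac{4 p_2^D \tau_2}{K^{1+2\gga_3}} \int_0^t Y^K_2(u)\, du + \frac{\tau_3}{K^{1+\gga_2+2\gga_3}} \int_0^t Z^K(u)\, du.
\end{equation*}

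\medskip\noindent
Lemma \ref{preliminary} together with the initial moment assumption gives $\sup_{K}\sup_{u\in[0,T]} \mathbb{E}[Y^K_2(u)]<\infty$ and $\sup_{K}\sup_{u\in[0,T]} \mathbb{E}[Z^K(u)]<\infty$. Taking the supremum in $t\leq T$ and then expectation in the drift, I would get
\begin{equation*}
\mathbb{E}\Bigl[\sup_{t\leq T}\bigl|Z^K(t)-Z^K(0)-\widetilde M^K_3(t)\bigr|\Bigr] \leq C\, T\, K^{\gga_2-\gga_3},
\end{equation*}
which tends to $0$ by hypothesis \eqref{gamma}. For the martingale, the two bracket contributions are $O(K^{-1-2\gga_3})$ and $O(K^{-1-\gga_2-2\gga_3})$ in expectation, so by Doob's $L^2$ inequality $\mathbb{E}[\sup_{t\leq T}|\widetilde M^K_3(t)|^2]\to 0$. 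Combining, $\sup_{t\leq T}|Z^K(t)-Z^K(0)| \to 0$ in $L^1$, hence in probability. Since $Z^K(0)$ converges in law to the constant $x_3$, this gives convergence in probability in $\mathbb{D}([0,T], \mathbb{R}_+)$ to $x_3$, as required.

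\medskip\noindent
\textbf{Main obstacle.} No single step is delicate: all of them are simple moment bounds or Doob's inequality. The only subtlety is that the drift involves $Z^K$ itself (not just $Y^K_2$), so one needs the a priori bound $\sup_K \sup_{u\leq T}\mathbb{E}[Z^K(u)]<\infty$ from Lemma \ref{preliminary} before closing the estimate; otherwise a Gronwall step on the semimartingale decomposition of $\mathbb{E}[Z^K(t)]$ would be required, which is still straightforward given $K^{\gga_2-\gga_3}\to 0$.
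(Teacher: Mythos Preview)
Your proof is correct and follows the same overall scheme as the paper: write the semimartingale decomposition of the rescaled third component at time scale $K^{\gga_2}$, and observe that both the drift and the martingale part are killed by the prefactor $K^{\gga_2-\gga_3}$ (respectively by negative powers of $K$ in the bracket). One cosmetic slip: the coefficient in front of $\int_0^t Z^K(u)\,du$ in your bracket should be $\tau_3 K^{-(1+2\gga_3)}$, not $\tau_3 K^{-(1+\gga_2+2\gga_3)}$; this is harmless since either exponent is negative. Also, the paper reserves the letter $Z^K$ for the process at time scale $K^{\gga_3}$ and writes $Y^K_3$ here, so you may want to rename to avoid a clash.

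The one substantive difference is the ingredient used to control $\int_0^t Y^K_2$. The paper invokes Theorem~\ref{thY} (the convergence $Y^K_2\to y_2$) and then appeals to a ``standard tightness argument''. You bypass this entirely and use only the uniform first-moment bounds of Lemma~\ref{preliminary}, obtaining a clean quantitative estimate $\mathbb{E}\bigl[\sup_{t\le T}|Z^K(t)-Z^K(0)|\bigr]\le C\,K^{\gga_2-\gga_3}$ directly. Your route is more elementary (it does not need the law-of-large-numbers limit for the second component) and more explicit; the paper's route is shorter on the page but leans on a result that is, for this purpose, stronger than necessary.
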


\begin{proof}
	 Following \eqref{semi} and \eqref{crochet}, let us write the semimartingale decomposition of the process 
	$\ Y^K_{3} = \frac{N^K_3(.K^{\gamma_2})}{K^{1+\gga_2+\gga_{3}}}$.
	We have for any $t\le T$,
	\be
	Y^K_{3}(t)  =  Y^K_{3}(0)+ 2 \tau_{2} p^D_{2} K^{\gamma_{2}-\gamma_{3}} \int_{0}^t Y^K_{2}(s) ds - \tau_{3}K^{\gamma_{2}-\gamma_{3}} \int_{0}^t Y^K_{3}(s) ds + \widehat M^K_{3}(t),
	\ee
	where $\widehat M^K_{3}$ is a square-integrable martingale such that 
	$$\langle \widehat M^K_{3}\rangle_{t}= 2 \tau_{2} p^D_{2} K^{\gamma_{2}-\gamma_{3}} \int_{0}^t Y^K_{2}(s) ds + \tau_{3}K^{\gamma_{2}-\gamma_{3}} \int_{0}^t Y^K_{3}(s) ds .$$
	Let us recall that $\gamma_{2}<\gamma_{3}$, which makes $K^{\gamma_{2}-\gamma_{3}}$ tends to $0$ when $K$ tends to infinity. 
	
	Using Theorem \ref{thY}, we know that $Y^K_{2}$ converges to the continuous function $y_{2}$. By standard tightness argument, one can easily deduce that the process  $Y^K_{3}$ converges in probability  to $x_3$, on any finite time interval.
\end{proof}

\subsection{Asymptotic behavior at a time-scale of order $K^{\gamma_{3}}$}

\bi In order to catch the long time dynamics of the third component we will study the process $N^K$ on the time scale $tK^{\gga_3}$. To this end, let us introduce the jump process $Z^K$ defined  for all $t \geq0$ by 

$$ Z^K(t)=(\frac{N^K_1(t \, K^{\gga_3})}{K}, \frac{N^K_2(t\, K^{\gga_3})}{K^{1+\gga_2}},\frac{N^K_3(t\, K^{\gga_3})}{K^{1+\gga_2+\gga_3}}). $$

\me Note that  we still  have  $$Z^K(0)=Y^K(0)=X^K(0).$$

\me At this time scale, the second component has time to reach the equilibrium of its deterministic approximation by an average procedure. By an adaptation of the proof in \cite{lea2014} to this specific framework, we are able to prove the following theorem.
\begin{theorem} \label{thZ}		
	
	Assume that there exists  $(x_1,x_2,x_3) \in \Rp{3}$ such that the sequence $(Z^K(0))_{K \in \bbN^*}$ converges in law to $(x_1,x_2,x_3)$ when $K$ tends to infinity and such  that \be
	\label{hypZ}\displaystyle \sup_{K} \E{ Z^K_1(0)} < +\infty, \quad \sup_{K} \E{ Z^K_2(0)} < +\infty \quad \text{ and } \quad \sup_{K} \E{ Z^K_3(0)} < +\infty.\ee
	Let $\Gamma^K_2$ be the $l_m(\Rp{})$-valued random variable given by \be
	\Gamma^K_2([0,t]\times B)= \int_0^t \1{B}(Z^K_2(s))ds.
	\ee 
	Then for all $T>0$, the sequence $(Z^K_1,\Gamma^K_2,Z^K_3)_{K \in \bbN^*}$ converges in law in $\DT{\Rp{}} \times l_m(\Rp{})\times \DT{\Rp{}} $ to $(z_1, \delta_{z_2^*}(dz_2)\,ds, z_3 )$. The functions $z_1$ and $z_3$ are defined  for all $t\leq T$ by 
	\begin{equation}
	\left\lbrace \begin{array}{lll}
	\label{z}
	z_1(t)&= x_1 \\ \\
	z_3(t)&=  \begin{large}\frac{ \tau_2 }{\tau_3} z^*_2 + \big( x_3 - \frac{ \tau_2 }{\tau_3}z^*_2 \big) \, e^{-\tau_3\,t}  \end{large}
	\end{array}\right.
	\end{equation}and $z_2^*$ is the value of $y_{2}$ at infinity: $$z_2^*=\frac{\tau_1 x_1}{\tau_2}.$$
	
\end{theorem}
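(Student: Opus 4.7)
The plan is to adapt Kurtz's occupation-measure averaging scheme \cite{kurtz92,lea2014} in three stages: a semimartingale decomposition with vanishing noise, joint tightness of the triple $(Z_1^K, \Gamma_2^K, Z_3^K)$, and identification of the limit; the crux is the convergence of $\Gamma_2^K$ to a Dirac. Changing time in \eqref{semi}--\eqref{crochet}, the three components decompose as
\begin{align*}
Z_1^K(t) &= Z_1^K(0) + \tilde M_1^K(t), \\
Z_2^K(t) &= Z_2^K(0) + K^{\gamma_3-\gamma_2} \int_0^t \bigl[\tau_1 Z_1^K(s) - \tau_2 Z_2^K(s)\bigr] ds + \tilde M_2^K(t), \\
Z_3^K(t) &= Z_3^K(0) + 2 \tau_2 p_2^D \int_0^t Z_2^K(s) ds - \tau_3 \int_0^t Z_3^K(s) ds + \tilde M_3^K(t),
\end{align*}
with brackets $\langle \tilde M_1^K \rangle_t = O(K^{\gamma_3-1})$, $\langle \tilde M_2^K \rangle_t = O(K^{\gamma_3-1-\gamma_2})$ and $\langle \tilde M_3^K \rangle_t = O(K^{-1-\gamma_2-\gamma_3})$, all vanishing as $K\to\infty$ thanks to the uniform $L^1$ bounds of Lemma \ref{preliminary}; note in particular that $\tilde M_2^K/K^{\gamma_3-\gamma_2}$ vanishes uniformly on $[0,T]$ by Doob's inequality.

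Tightness is handled as follows. The process $Z_1^K$ is a martingale with vanishing bracket, so Aldous' criterion is immediate and simultaneously gives convergence in probability to the constant $x_1$. The process $Z_3^K$ is tight by Aldous again since its drift is uniformly Lipschitz in time in $L^1$ (using Lemma \ref{preliminary}) and its martingale bracket vanishes. For $\Gamma_2^K$ in $l_m(\mathbb{R}_+)$, since the $ds$-marginal is fixed as Lebesgue on $[0,T]$, tightness reduces to a compact-containment estimate in the space variable, which follows from Markov's inequality together with $\sup_K \sup_{t \le T} \mathbb{E}[Z_2^K(t)] < +\infty$.

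The main obstacle is identifying the limit of $\Gamma_2^K$. I would apply Dynkin's formula to $f(Z_2^K)$ for test functions $f \in C_c^2(\mathbb{R}_+)$ and compute the generator at the time scale $tK^{\gamma_3}$. A Taylor expansion shows that the leading-order term, of size $K^{\gamma_3-\gamma_2}$, equals $[\tau_1 Z_1^K - \tau_2 Z_2^K]\, f'(Z_2^K)$, while the curvature and jump corrections are of smaller order $K^{\gamma_3-1-\gamma_2}$. Dividing by $K^{\gamma_3-\gamma_2}$ and passing to the limit along any subsequence for which $(Z_1^K, \Gamma_2^K)$ converges to $(x_1, \Gamma)$, we obtain
$$\int_{[0,t]\times \mathbb{R}_+} [\tau_1 x_1 - \tau_2 z_2]\, f'(z_2)\, \Gamma(ds, dz_2) = 0 \quad \text{for every } f \in C_c^2(\mathbb{R}_+).$$
Since $f'$ ranges over $C_c^1(\mathbb{R}_+)$, the time-disintegration $\Gamma_s(dz_2)$, well-defined because the $ds$-marginal of $\Gamma$ is Lebesgue, must be supported on $\{\tau_2 z_2 = \tau_1 x_1\} = \{z_2^*\}$ for almost every $s$. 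Hence $\Gamma = \delta_{z_2^*}(dz_2) \otimes ds$.

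The passage to the limit in the equation for $Z_3^K$ is then routine: $\int_0^t Z_2^K(s) ds = \int z_2\, \Gamma_2^K(ds, dz_2)$ converges to $z_2^*\, t$ (the uniform integrability in $z_2$ requires an $L^2$ bound on $Z_2^K$ obtained by an argument similar to Lemma \ref{preliminary}), $2 p_2^D \to 1$, and $\tilde M_3^K \to 0$. The limit $z_3$ therefore solves the linear integral equation $z_3(t) = x_3 + \tau_2 z_2^*\, t - \tau_3 \int_0^t z_3(s) ds$, whose unique solution is the function displayed in \eqref{z}. Joint convergence of the triple follows from Slutsky's lemma since the limit is deterministic.
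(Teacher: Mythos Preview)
Your proof is correct and follows essentially the same route as the paper's: both rely on Kurtz's occupation-measure averaging, establish tightness of $(Z_1^K,\Gamma_2^K,Z_3^K)$ together with the vanishing of all three martingale parts, and identify the limiting occupation measure by applying the generator to compactly supported test functions, dividing by $K^{\gamma_3-\gamma_2}$, and reading off the constraint $\int(\tau_1 x_1-\tau_2 z_2)f'(z_2)\,\Gamma(ds,dz_2)=0$. The only organisational difference is that the paper works with the occupation measure of the full vector $Z^K$ in $l_m(\mathbb{R}_+^3)$ and invokes Lemma~2.9 of \cite{lea2014} explicitly for compactness and for the passage to the limit in $\int_0^t Z_2^K(s)\,ds$, whereas you track $\Gamma_2^K$ alone and flag the needed uniform integrability by hand; both variants lead to the same identification.
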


\me Let us first state a lemma in which all moment estimates are gathered.
\begin{lemme} \label{momentZ}
	Under Assumption \eqref{hypZ}, we obtain
	\ben
	\displaystyle \sup_{K} \E{ \sup_{t \in [0,T]} Z^K_1(t)} < +\infty \ ;\ \quad   \forall \, t>0\quad  \displaystyle \sup_{K} \E{\int_{0}^{t} Z^K_2(s) \, ds} < +\infty \ ;\ 
	\een
	and $$\displaystyle \sup_{K} \E{\sup_{t \in [0,T]} Z^K_3(t)} < +\infty.$$
\end{lemme}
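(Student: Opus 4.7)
The plan is to treat the three bounds separately, relying on Lemma \ref{preliminary} for one-time expectations and on Doob's $L^2$ inequality to control the martingale parts of $N^K_1$ and $N^K_3$ once rescaled in time by $K^{\gga_3}$. For the first assertion I would exploit that $N^K_1$ is itself a (critical) martingale. Writing $Z^K_1(t) = Z^K_1(0) + \widetilde M^K_1(t)$ with $\widetilde M^K_1(t) = M^K_1(t K^{\gga_3})/K$, a change of variables in \eqref{crochet} gives
\[
\langle \widetilde M^K_1 \rangle_t = \frac{\tau_1}{K^2} \int_0^{t K^{\gga_3}} N^K_1(u)\, du = \tau_1\, K^{\gga_3 - 1} \int_0^t Z^K_1(s)\, ds.
\]
Since $\E{Z^K_1(s)} = \E{Z^K_1(0)}$ is bounded in $K$ by \eqref{hypZ}, one has $\E{\langle \widetilde M^K_1 \rangle_T} \leq C\, T\, K^{\gga_3 - 1}$, which tends to $0$ because $\gga_3 < 1$. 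Doob's $L^2$ inequality and Cauchy--Schwarz then yield $\sup_K \E{\sup_{t \leq T} |\widetilde M^K_1(t)|} < +\infty$, and hence $\sup_K \E{\sup_{t \leq T} Z^K_1(t)} < +\infty$.

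The second assertion follows at once from Fubini's theorem and Lemma \ref{preliminary}:
\[
\E{\int_0^t Z^K_2(s)\, ds} = \int_0^t \frac{\E{N^K_2(sK^{\gga_3})}}{K^{1+\gga_2}}\, ds \leq C\, t,
\]
uniformly in $K$. For the third assertion I would discard the nonpositive term $-\tau_3 K^{-\gga_3} \int_0^t N^K_3(s)\, ds$ in \eqref{semi}, yielding
\[
\sup_{t \leq T K^{\gga_3}} N^K_3(t) \leq N^K_3(0) + 2\tau_2\, p^D_2 \int_0^{TK^{\gga_3}} N^K_2(s)\, ds + \sup_{t \leq T K^{\gga_3}} |M^K_3(t)|.
\]
After dividing by $K^{1+\gga_2+\gga_3}$, Lemma \ref{preliminary} bounds the drift contribution by a constant, while the bracket formula in \eqref{crochet} combined with Lemma \ref{preliminary} yields $\E{\langle M^K_3 \rangle_{TK^{\gga_3}}} \leq C\, K^{1+\gga_2+\gga_3}$; Doob's $L^2$ inequality then controls the martingale contribution after renormalization by $K^{1+\gga_2+\gga_3}$.

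The only delicate step is the first assertion: a priori $Z^K_1$ is a nonnegative martingale observed on the long interval $[0, T K^{\gga_3}]$, and a naive $L^1$ Doob bound on the supremum would diverge in $K$. What rescues the estimate is that the jumps of $Z^K_1$ have size $1/K$, so the bracket scales as $K^{\gga_3 - 1}$; since $\gga_3 < 1$, the $L^2$ Doob inequality is strong enough to deliver the $L^1$ uniform bound required here.
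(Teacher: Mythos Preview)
Your proof is correct. For the first and third assertions your argument is essentially what the paper has in mind when it writes ``usual arguments (localization, Doob's inequality and Gronwall's Lemma)'': you make explicit the key point that the rescaled brackets are bounded uniformly in $K$ thanks to $\gga_3<1$ and Lemma~\ref{preliminary}, so Doob's $L^2$ inequality suffices. In fact, by discarding the nonpositive drift term for $N^K_3$ and invoking Lemma~\ref{preliminary} for both the remaining drift and the bracket, you avoid Gronwall for the third component altogether, which is slightly cleaner than the route the paper alludes to.

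For the second assertion your route genuinely differs from the paper's. The paper rearranges the semimartingale decomposition \eqref{zk2} to write
\[
\tau_2\,\E{\int_0^t Z^K_2(s)\,ds}=K^{\gga_2-\gga_3}\big(\E{Z^K_2(0)}-\E{Z^K_2(t)}\big)+\tau_1\int_0^t\E{Z^K_1(s)}\,ds,
\]
and then observes that the first term vanishes as $K\to\infty$ (since $\gga_2<\gga_3$ and $\E{Z^K_2(t)}$ is bounded by Lemma~\ref{preliminary}) while the second is bounded. Your argument---Fubini combined with the uniform-in-time bound of Lemma~\ref{preliminary}---is more direct and equally valid. The paper's rearrangement has the conceptual advantage of isolating precisely the averaging identity that is reused later in the proof of Theorem~\ref{thV} (see the display following~\eqref{V1}), so it is natural in context; but for the lemma itself your shortcut is the cleaner choice.
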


\begin{proof}[Proof of Lemma \ref{momentZ}]
	The first and third estimates are obtained by usual arguments (localization, Doob's inequality and Gronwall's Lemma). 
	Let us focus on the second one. 
	
	\me  By positivity and definition of the process $Z^K_2$, we have for any $t>0$
	\be
	\label{zk2}Z^K_2(t)= Z^K_2(0) + \tau_{1}K^{\gamma_{3}-\gamma_{2}} \int_{0}^t Z^K_1(s) ds -  \tau_{2}K^{\gamma_{3}- \gamma_{2}} \int_{0}^t Z^K_2(s) ds + \widetilde M^K_{2}(t),\ee
	the latter term being a square-integrable martingale and
	\be
	\label{tildeM2}
	\langle  \widetilde M^K_{2} \rangle_{t} = \frac{1}{K^{1+\gamma_{2}}} \Big( 2 \tau_{1} K^{\gamma_{3}-\gamma_{2}}\int_{0}^t Z^K_{1}(s) ds + \tau_{2} K^{\gamma_{3}} \int_{0}^t Z^K_{2}(s) ds\Big).\ee
	In particular,
	$$\E{\int_0^t Z^K_2(s) \, ds} = {1\over \tau_{2}} K^{\gamma_{2}-\gamma_{3}}  \Big(\E{ Z^K_2(0)}-\E{ Z^K_2(t)} \Big) +  {\tau_1\over \tau_{2}}\int_0^t \E{Z^K_1(s)} \,ds .$$ 
	Assumptions	\eqref{hypZ}	and  Lemma \ref{preliminary} ensure that the first term goes to $0$ as $K$ tends to infinity and the third  term is bounded uniformly in $K$. That allows to conclude.				
\end{proof}

\begin{proof}[Proof of Theorem \ref{thZ}]	 
	Let $\Gamma^K$ be the occupation measure of $Z^K$, a random measure belonging to  the space $l_m(\Rp{})$ of positive measures on $[0,\infty) \times \Rp{} $ with mass $t$ on$ [0,t]\times \Rp{}$  and defined  for all Borelian set  $B$  and for $t>0$ by
	$$\Gamma^K([0,t] \times B)=\int_0^t \1{B}(Z^K(s))\, ds.$$
	
	%Let $\phi : \Rp{3} \to [1,+\infty)$ be the locally bounded function $$\forall x \in \Rp{3}, \quad \phi(x)=1+x_1+x_2+x_3.$$   Lemma \ref{momentZ} yields  \be \label{control}\forall t, \quad \sup_{K} \E{\int_0^t \phi (Z^K(s)) ds} < +\infty. \ee
	
	\me	Using  Lemma 2.9 of \cite{lea2014} (cf. Appendix), we obtain that $(\Gamma^K)_K$ is relatively compact in $l_m(\Rp{})$ endowed with a weak topology generated by the  class of test functions defined in \eqref{testfunction} .
	
	\me 	 Let us denote by $\Gamma\in l_m(\Rp{3})$ a limiting value. Using \cite{kurtz92} Lemma 1.4, one can show that there exists a $\cP(\Rp{3})$-valued process $\gamma_s$ such that $$\Gamma(dz \times ds)= \gamma_s(dz) \, ds.$$ 
	
	\me Let us now introduce  the function $F^K$  $$\forall z \in \Rp{3}, \quad F^K(z) = (1+{1\over K^{\gamma_{2}}})\,\tau_2 z_2 - \tau_3 \, z_3.$$
	Then for all $t \geq 0$,
	\begin{align}\label{zk3} \quad Z^K_3(t) &= Z^K_3(0) + (1+{1\over K^{\gamma_{2}}})\tau_2 \int_0^t Z^K_2(s)ds - \tau_3 \, \int_0^t Z^K_3(s) ds+ \widetilde M^K_3(t)  \\
	&=Z^K_3(0) + \int_0^t F^K(Z^K(s)) \, ds + \widetilde M^K_3(t)\nonumber \\
	\label{zk1}Z^K_1(t) &= Z^K_1(0) + \widetilde M^K_1(t)
	\end{align}
	with $(\widetilde M^K_1,\widetilde M^K_3)$ independent martingales and such that for all $t \geq 0$, \begin{align} \label{crochet1}<\widetilde M^K_1>_t =& K^{\gga_3-1}\, \int_{0}^{t} 2\tau_1\, Z^K_1(s) \,ds\\
	\label{crochet3} <\widetilde M^K_3>_t =& K^{-(1+\gga_2+\gga_3)}\,  \int_{0}^{t} \big( 2(1+{1\over K^{\gamma_{2}}})\tau_2 \, Z^K_2(s) + \tau_3 Z^K_3(s) \big)\,ds. \end{align}
	
	\me  By usual arguments involving  Lemma \ref{momentZ} one can prove that the sequences of  processes $(Z^K_3)_{K}$ and   $(Z^K_1)_{K}$ are uniformly tight  in $\DT{\Rp{}}$. Let us also note that the distributions of any limiting value only charge processes with a.s. continuous trajectories. 
	
	\me 	Furthermore by Doob's inequality \begin{align*}\E{\sup_{t \leq T} \va{\widetilde M^K_3(t)}^2}\leq 4 \,\E{<\widetilde M^K_3>_T} .
	\end{align*}
	Using (\ref{crochet3}) and Lemma \ref{momentZ}, we obtain that 
	$\ \lim_{K \to \infty} \E{ \sup_{t \in [0,T]} \va{ \widetilde M^K_3(t)}^2} =0$ and  a similar  property for  $\widetilde M^K_1$ since $\gamma_{3}<1$. 
	Then, we deduce from Markov's inequality, that the processes $(\widetilde M^K_3)_{K}$ and $(\widetilde M^K_1)_{K}$ converge in probability for the uniform norm to 0. Hence they converge in law in $\DT{\Rp{}}$ to $0$. 
	
	\bi Adding all these asymptotic behaviors, we deduce that there exists a subsequence  of $(Z^K_1,Z^K_3)$  converging in law in $\bbD([0,T],\Rp{2})$ to the deterministic limit $(z_1, Z_3^{\infty})$ defined for all $t \geq 0$ by \begin{align*}\, \quad z_1(t) &=  x_1 \\
	Z_3^{\infty}(t) &=  x_3 + \int_{\Rp{3}\times [0,t]} (\,\tau_2 z_2 - \tau_3 \, z_3) \gamma_{s}(dz)  ds.
	\end{align*}

	\bi	Then by convergence of $(Z^K_1)_{K}$, $(Z^K_3)_{K}$ and $\Gamma^K$, one can easily deduce that  $$\gamma_s(dz) = \delta_{\bar z_1} (dz_1) \delta_{Z^{\infty}_3(s)}(dz_3) \, \tilde \gamma_s(z_{1},z_{3}, dz_2). $$
	
	\bi We have now to identify these measures  $\tilde \gamma_{s}, \ s\in [0,T]$. 	
	
	\me Let us write the generator of the process $Z^K_2$. For  $h \in C^\infty_{c}$	, it is given for $z \in \Rp{3}$ by
	\begin{align*}  \cL^K_2(h)(z) = &\big( h(z_2+2\, K^{-(1+\gga_2)}) - h(z_2) \big) \, \frac{\tau_1}{2} \, z_1 \, K^{1+\gga_3}\\ & + \big( h(z_2+\, K^{-(1+\gga_2)}) - h(z_2) \big) \,p^R_2 \tau_2 \, z_2 \, K^{1+\gga_2+\gga_3}\\ & + \big( h(z_2-\, K^{-(1+\gga_2)}) - h(z_2) \big) \, p^D_2 \tau_2 \, z_2 \, K^{1+\gga_2+\gga_3} . \end{align*}
	
	\me Let us introduce the function $g$ defined for $z \in \Rp{3}$ by $$ \quad g(z) = \,z_1 \tau_1 - \tau_2 z_2. $$
	Then,  by a Taylor expansion, we obtain that
	\be\label{CVgen}\forall h \in C^\infty_{c},\quad \lim_{K \to \infty} \sup_{z \in \Rp{}} \va{K^{\gga_2-\gga_3} \cL^K_2(h)(z) - g(z)h'(z_2)}=0.\ee
	
	\bi Using \eqref{tildeM2},  Lemma \ref{lea} and the same arguments as above, we obtain that the  sequence of processes $  \Big(K^{\gga_2 -\gga_3}\,\Big(h(Z^K_2(t)) - h(Z^K_2(0)) - \int_0^t \cL_2^K(h)(Z^K(s)) \, ds\Big), \quad t\in [0,T] \Big)$ converges in law in $\bbD([0,T],\bbR)$ to $0$.
	In the other hand, using that $h$ is bounded and  \eqref{CVgen} and $g\, h'\in C_{b}$ (since $h$ has compact support), we easily see that  this sequence also converges to 
	$$ - \int_0^t ( \tau_1 x_1 - \tau_2 z_2) h'(z_2) \,  \gamma_s(z_{1}, z_{3}, dz_2) \, ds.$$ 	\me We deduce that for any $t\in[0,T]$, for any  $h \in C^\infty_{c}$
	$$ \int_0^t ( \tau_1 x_1 - \tau_2 z_2) h'(z_2) \,\tilde \gamma_s(z_{1}, z_{3}, dz_2) \, ds = 0.$$
	It implies that 	  $$\tilde \gamma_s(z_{1}, z_{3}, dz_2) = \delta_{ \frac{ \tau_1  x_1}{ \tau_2}}(dz_2).$$
	
	\me To end the proof, we  solve the equation satisfied by  $Z^{\infty}_3(t)$  and obtain that $$ \forall t \geq 0, \quad Z^{\infty}_3(t)= \frac{ \tau_1 x_1}{\tau_3} + \big( x_3 - \frac{\tau_1 x_1}{\tau_3} \big) \, e^{-\tau_3\,t}.$$
	Hence we have uniquely identified the limit of any converging subsequence.  That ends the proof.
	
\end{proof}

\section{Amplified fluctuations}
In this section, we will quantify the large fluctuations observed on the simulations. As pointed out above, each component has its own typical size and time scale. Hence,  we will study separately the fluctuations of the second and third types.  Classical results easily imply  that	 the first component behaves as a Brownian motion: for large $K$, for all $t\in[0,T]$,
$$N^K_1(t ) \sim x_1\,K + \sqrt{K}\,\sqrt{\tau_{1} x_1 } \, B_{t}.$$
%which explains the simulations of Figure 1. 
The originality of our results concerns the large fluctuations of the two last types due to the amplification of these first type fluctuations. 

\subsection{The large fluctuations of the second type}
\bi  As seen in Subsection 3.1,  the typical size scale (respectively time scale) of the second type is $K^{1+\gga_2}$ (respectively $K^{\gga_2}$) and  the first order asymptotics relative to this time scale is given by the function $y$ defined by \eqref{y}.
We are also able to give an expansion of the process  at  the second and  third orders  on such time scale.

\begin{theorem} \label{thU}
	Let define the sequence $(U^K)_K$ by $$ \forall t \geq 0, \quad U^K(t)=K^{(1-\gga_2)/2} \big( Y^K(t)-y(t) \big).$$
	(i)	Assume that there exists $U_0=(U_0^{(1)},U_0^{(2)})\in \bbR^2$ such that  $(U^K(0))_{K \in \bbN^*}$ converges in law to $U_0$ and that \be
	\label{CI}\displaystyle \sup_{K} \E{ U^K_1(0)^2+ U^K_2(0)^2} < +\infty.\ee
	Then for each $T>0$, the sequence $(U^K)_{K \in \bbN^*} $ converges in law in $\bbD([0,T], \bbR^2)$ to the process $U=(U_1,U_2)$ defined  for all $t\geq0$ by
	\be U_1(t)&=&U_0^{(1)} + \sqrt{\tau_1 \, x_1 } \,B_1(t), \\
	U_2(t)&=& U_0^{(2)} + \tau_1 \int_0^t U_1(s) ds - \tau_2 \int_0^t U_2(s) \, ds,\ee where $B_1$ is a standard Brownian motion.
	
	\bi (ii)	Furthermore, the sequence $(W^K_2)_{K \in \bbN^*}$ defined by $$\forall t \in [0,T], \quad W^K_2(t) = K^{\gga_2/2} \; \big[ U^K_2(t) - U_0^{(2)} - \tau_1 \int_0^t U_1^K(s) ds + \tau_2 \int_0^t U^K_2(s)ds \big],$$ converges in law in $\bbD([0,T], \bbR)$, for each $T>0$, to the process $(\sqrt{\tau_2 y_2(t)} \, B_2(t), \quad t \in [0,T])$ where $B_2$ is a standard Brownian motion independent of the process $B_1$.
\end{theorem}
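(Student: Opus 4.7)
Starting from the semimartingale decomposition \eqref{YK}--\eqref{MYK} and subtracting the deterministic ODE $y_1 \equiv x_1$, $y_2'(t) = \tau_1 x_1 - \tau_2 y_2(t)$ satisfied by $y$, then multiplying by $K^{(1-\gamma_2)/2}$, one obtains
\begin{align*}
U^K_1(t) &= U^K_1(0) + K^{(1-\gamma_2)/2}\,\widehat M^K_1(t),\\
U^K_2(t) &= U^K_2(0) + \tau_1 \int_0^t U^K_1(s)\,ds - \tau_2 \int_0^t U^K_2(s)\,ds + K^{(1-\gamma_2)/2}\,\widehat M^K_2(t).
\end{align*}
For part (i), the point is that at this scale all the noise is carried by $\widehat M^K_1$. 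Using \eqref{MYK} and Theorem \ref{thY}, $K^{1-\gamma_2}\langle\widehat M^K_1\rangle_t = \tau_1 \int_0^t Y^K_1(u)\,du \to \tau_1 x_1 t$ in probability, whereas $K^{1-\gamma_2}\langle\widehat M^K_2\rangle_t$ and the scaled cross-bracket both carry an extra factor $K^{-\gamma_2}$ (or $K^{-2\gamma_2}$) and vanish; the jumps of $K^{(1-\gamma_2)/2}\widehat M^K_i$ are of size $O(K^{-(1+\gamma_2)/2})\to 0$. The martingale functional CLT then yields $K^{(1-\gamma_2)/2}\widehat M^K_1 \Rightarrow \sqrt{\tau_1 x_1}\,B_1$ in $\bbD([0,T],\bbR)$ and $K^{(1-\gamma_2)/2}\widehat M^K_2 \Rightarrow 0$.

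Tightness of $(U^K)_K$ in $\bbD([0,T],\bbR^2)$ follows by combining Doob's inequality for the martingale parts (which, together with Gronwall's lemma applied to the equation for $U^K_2$ and assumption \eqref{CI}, yields $\sup_K \bbE[\sup_{t\le T}(|U^K_1(t)|^2 + |U^K_2(t)|^2)] < \infty$) with the Lipschitz drift, exactly as in the proof of Theorem \ref{thY}. Any limit point $(U_1, U_2)$ must satisfy the linear system announced in (i); pathwise uniqueness of this system identifies the limit. Observe that the resulting $U_2$ is then a finite variation (indeed $C^1$) process driven entirely by the Brownian integrand $U_1$, which matches the paper's description that the second-component fluctuations are not Gaussian.

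For part (ii), the same decomposition gives
$$W^K_2(t) = K^{1/2}\,\widehat M^K_2(t) + K^{\gamma_2/2}\bigl(U^K_2(0) - U_0^{(2)}\bigr),$$
so modulo the initial-condition term (which we may take to vanish by assuming $U^K(0) = U_0$ a.s., or by a coupling argument), the problem reduces to the asymptotics of $K^{1/2}\widehat M^K_2$. The scaled bracket reads
$$K\langle\widehat M^K_2\rangle_t = \frac{2\tau_1}{K^{\gamma_2}}\int_0^t Y^K_1(u)\,du + \tau_2 \int_0^t Y^K_2(u)\,du \;\longrightarrow\; \tau_2 \int_0^t y_2(s)\,ds,$$
with jumps tending uniformly to $0$, so the martingale CLT produces a continuous Gaussian martingale limit with deterministic quadratic variation $\tau_2 \int_0^\cdot y_2(s)\,ds$, i.e.\ the process $\int_0^\cdot \sqrt{\tau_2 y_2(s)}\,dB_2(s)$. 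For the joint limit with the $B_1$ of part (i), one must compute the cross-bracket at the \emph{joint} scaling:
$$K^{(1-\gamma_2)/2}\,K^{1/2}\,\langle\widehat M^K_1,\widehat M^K_2\rangle_t = -K^{-\gamma_2/2}\,\tau_1\int_0^t Y^K_1(u)\,du \;\longrightarrow\; 0,$$
so the multidimensional martingale CLT forces independence of $B_1$ and $B_2$ in the limit.

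The main obstacle I anticipate is precisely this last point. At finite $K$, the martingales $\widehat M^K_1$ and $\widehat M^K_2$ are strongly correlated because the same Poisson measure $\cN^-_1$ drives the type-$1$ differentiation events counted in both (with opposite signs); their correlation at the natural part-(i) scale $K^{(1-\gamma_2)/2}$ is of order $K^{-\gamma_2}$. Asymptotic independence of $B_2$ from $B_1$ is therefore not intrinsic but arises from the \emph{mismatch} between the two scaling exponents $(1-\gamma_2)/2$ and $1/2$, together with $\gamma_2 > 0$, and must be justified by a single two-dimensional application of the martingale CLT to $(K^{(1-\gamma_2)/2}\widehat M^K_1, K^{1/2}\widehat M^K_2)$ rather than two separate one-dimensional ones.
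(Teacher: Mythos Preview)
Your argument is correct and, for part (i), takes a somewhat different route from the paper. The paper identifies the limit via the extended generator: it writes down $\cL^K(f,t)$ acting on $C^2_b$ functions, Taylor-expands to obtain the limiting operator $\frac{\tau_1}{2}x_1\partial_1^2 + (\tau_1 u_1 - \tau_2 u_2)\partial_2$, and then uses Dynkin's formula, uniform integrability, and a martingale-problem characterization (finishing with a representation theorem) to conclude. You instead apply the martingale functional CLT directly to the pair $(K^{(1-\gamma_2)/2}\widehat M^K_1, K^{(1-\gamma_2)/2}\widehat M^K_2)$ and then invoke the continuous dependence of the linear system on its driving noise. Your approach is more elementary and makes the structure (``all the noise at this scale comes from $\widehat M^K_1$'') transparent; the paper's generator approach is more systematic and would generalise more readily to nonlinear drifts. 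Both rely on the same moment estimate \eqref{momentU} and the same jump-size bound for continuity of limit points.

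For part (ii) the two proofs coincide: the paper also reduces to $W^K_2 = \sqrt{K}\,\widehat M^K_2$ and cites the martingale CLT (Ethier--Kurtz, Theorem 7.1.4). Two points you raise are worth keeping. First, the extra term $K^{\gamma_2/2}(U^K_2(0)-U_0^{(2)})$ is genuinely there and the paper silently drops it; as you note, one needs either $U^K(0)=U_0$ or an extra rate hypothesis on the initial conditions for the statement to hold as written. Second, your observation that the asymptotic independence of $B_1$ and $B_2$ comes from the vanishing of the \emph{mixed-scale} cross-bracket $K^{(1-\gamma_2)/2}K^{1/2}\langle\widehat M^K_1,\widehat M^K_2\rangle_t\to 0$, and therefore requires a single two-dimensional application of the CLT, is exactly right; the paper's appeal to Ethier--Kurtz 7.1.4 for the pair $(K^{(1-\gamma_2)/2}\widehat M^K_1,\sqrt{K}\,\widehat M^K_2)$ is implicitly doing this. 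Note also that your limit $\int_0^{\cdot}\sqrt{\tau_2 y_2(s)}\,dB_2(s)$ is the correct continuous Gaussian martingale with bracket $\tau_2\int_0^{\cdot} y_2(s)\,ds$; the expression $\sqrt{\tau_2 y_2(t)}\,B_2(t)$ in the theorem statement should be read in that sense.
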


From this theorem, we can deduce the following expansion, which quantifies the large waves of  fluctuations. Assuming that $U_0$ is equal to zero,  we obtain for all $t$ and large $K$, 

$$N_2^K(t) \sim K^{1+\gga_2} y_2(t \, K^{-\gga_2}) + K^{(1+ 3\gga_2)/2} U_2(t \, K^{-\gga_2}) +  K^{(1+ 2\gga_2)/2} \sqrt{\tau_2 \,y_2(t\,K^{-\gga_2}) } \,B_2(t \, K^{-\gga_2}) $$
where $$\forall t, \quad U_2(t) = \tau_1 \sqrt{\tau_1 x_1} \int_0^t B_1(s)\, ds - \tau_2 \int_0^t U_2(s)\, ds$$ and $B_1$, $B_2$ are independent Brownian motions.

\me
\begin{proof}[Proof of Theorem \ref{thU}.]
	(i) First we deduce from \eqref{CI} with similar arguments as above that
	\be
	\label{momentU}\displaystyle \sup_{K} \E{\sup_{t\in[0,T]} (U^K_1(t)^2+ U^K_2(t)^2)} < +\infty.\ee
	The tightness of the families $(\displaystyle \sup_{t \leq T } \va{ U^K_1(t) })_{K}$ and $(\displaystyle \sup_{t \leq T } \va{ U^K_2(t) })_{K}$  immediately follows.\\
	
	\bi We consider   the semi-martingale decomposition of $(U^K)$ and  write 
	$$U^K_i(t) = U^K_i(0) + A^K_i(t) + K^{\frac{1-\gga_2}{2}}\widehat M^K_i(t),$$ where $\widehat M^K$ has been defined in \eqref{YK}, $A^K_{1}
	= 0$ and $A^K_2(t)= \tau_1 \int_0^t  U^K_1(s) \,ds - \tau_2 \int_0^t U_2^K(s) \, ds$.
	
	\bi	Thanks to the above moment estimates, it is almost immediate to prove that the finite variation processes $<K^{\frac{1-\gga_2}{2}}\widehat M^K_i>$ and $A^K_2$ satisfy the Aldous condition. Thanks to  Aldous and Rebolledo criteria (see \cite{JoffeMetivier} and \cite{CoursMeleardVincent}) , the uniform tightness of $\cL(U^K)$ in $\cP(\bbD([0,T],\bbR^2))$ follows.
	
	\bi We denote by simplicity by the same notation $(U^K)_K$ a subsequence  converging in law in $\bbD([0,T],\bbR^2)$. Let  $Q$ be the limiting value of  $(\cL(U^K))_K$. It is easy to observe that 
	$$\displaystyle \sup_{t\in [0,T]} \p{\Delta U^K(t)} \leq 2K^{-(1+\gga_2)/2}.$$
	Therefore, by continuity of  the mapping $x \to \displaystyle \sup_{t\in [0,T]} \p{\Delta x(t)}$  from $\bbD([0,T],\bbR^2)$ into $\Rp{}$,   the probability measure $Q$ only charges the processes with continuous paths.
	
	\bi The extended generator of $U^K$ is defined for functions $f \in C^2_b(\bbR^2,\bbR)$ as: $ \forall u \in \bbR^2$,
	\be\begin{split} \label{gu} \cL^K(f,t)(u)& = \big( f(u_1+K^{-(1+\gga_2)/2},u_2) - f(u) \big) \, \frac{\tau_1}{2} \, K^{1+\gga_2} (K^{-(1-\gga_2)/2}u_1 + x_1) \\ +& \big( f(u_1-\, K^{-(1+\gga_2)/2},u_2+2 K^{-(1+3\gga_2)/2}) - f(u) \big) \,  \frac{\tau_1}{2} \, K^{1+\gga_2} (K^{-(1-\gga_2)/2}u_1 + x_1)\\ +& \big( f(u_1,u_2+ K^{-(1+3\gga_2)/2}) - f(u) \big) \, p^R_2 \,\tau_2 \, K^{1+2\gga_2} (K^{-(1-\gga_2)/2}u_2 + y_2(t))\\ +& \big( f(u_1,u_2- K^{-(1+3\gga_2)/2}) - f(u) \big) \, p^D_2 \,\tau_2 \, K^{1+2\gga_2} (K^{-(1-\gga_2)/2}u_2 + y_2(t))\\-& K^{(1-\gga_2)/2} \partial_2f(u) \big(\tau_1 x_1 - \tau_2 y_2(t) \big).  \end{split}\ee
	
	\me By a Taylor's expansion, we easily obtain that $\forall f \in C^{2}_b(\bbR^2,\bbR)$,
	\begin{equation}
	\label{lk1}
	\lim_{K \to \infty} \sup_{(u,t) \in \bbR\times \Rp{}} \big| \cL^K(f,t)(u) - (\frac{\tau_1}{2} x_1 \partial_1^2f(u) + (\tau_1 u_1 -\tau_2 u_2)\partial_2f(u)) \big|=0.\end{equation}

	\me In the other hand, let us define, for $f \in C^{2}_b(\bbR^2,\bbR)$, $u \in \bbD([0,T],\bbR^2)$ and $t \in [0,T]$, the function $\xi^{K,f}_t$ by $$\xi^{K,f}_t(u) = f(u_t) - f(u_0)- \int_0^t \cL^K(f,s)(u_s)\,ds.$$
	Then, by (\ref{gu}), Dynkin's formula and \eqref{momentU}, we can easily prove that the processes  $(\xi_t^{K,f}(U^K))_K$ are  uniformly integrable martingales.
	
	Therefore by standard arguments (cf. \cite{EthierKurtz}, \cite{CoursMeleardVincent}),  the limiting process under $Q$  is  continuous and satisfies the following martingale problem: $$ \forall f \in C^2_b(\bbR^2,\bbR), \quad f(U(t)) - f(U_0) - \int_0^t  (\frac{\tau_1}{2} x_1\partial_1^2f(U(s)) + (\tau_1 U_1(s) -\tau_2 U_2(s))\partial_2f(U(s)))\, ds$$ is a martingale.
	
	\me 	We conclude using a representation theorem (cf. \cite{Ikeda-Watanabe} p.84)  that for each $T>0$, the sequence $(U^K)_{K \in \bbN^*} $ converges in law in $\bbD([0,T], \bbR^2)$ to the  process $U=(U_1,U_2)$, unique solution of the following stochastic differential system:  for all $t\in [0;T]$,
	\ben U_1(t)&=&U_0^{(1)} + \sqrt{\tau_1 \, x_1} \,B_1(t),\\
	U_2(t)&=& U_0^{(2)} + \tau_1 \int_0^t U_1(s) ds - \tau_2 \int_0^t U_2(s) \, ds,\een with $B_1$ a Brownian motion.
	
	\bi (ii) Let us now expand the second component to the next order. We deduce from \eqref{YK} that 
	\ben U^K_1(.)- U^K(0)&=& K^{(1-\gga_2)/2} \,\widehat M^K_1(.),\\
	W^K_2(.)&= &\sqrt{K} \, \widehat M^K_2(.).\een
	
	\me Using \eqref{MYK}, \eqref{momentY} and applying Theorem 7.1.4 of \cite{EthierKurtz}, we conclude the proof.
\end{proof}

\subsection{The large fluctuations of the third type}

Let us now study the fluctuation process associated with the  largest fluctuation scale of the third component. We have seen in Theorem \ref{thZ} that at the time scale $K^{\gamma_{3}}$, the size of the population process in the third compartment is of order of magnitude $K^{1+\gamma_{2} +\gamma_{3}}$. In an usual setting, the Central Limit Theorem would lead to fluctuations of order $K^{(1+\gamma_{2} +\gamma_{3})/2}$. We will see in the next theorem that they are of order $K^{(1+2\gamma_{2} +3\gamma_{3})/ 2}\gg K^{(1+\gamma_{2} +\gamma_{3})/2}$, since amplified by the  fluctuations of the first compartment. 

\bi Using  \eqref{z}, \eqref{zk2} and \eqref{zk3}, we know that  for all $t\ge 0$,
\begin{align}\label{flucz3}
\big( Z_3^K(t)-z_3(t) \big) &= \big( Z_3^K(0)-z_3(0) \big) + \tau_2 \int_0^t \big( Z_2^K(s)-z_2^* \big) ds - \tau_3 \, \int_0^t \big( Z_3^K(s)-z_3(s) \big) ds \nonumber \\ & \quad  + \tau_2 \frac{1}{ K^{\gga_2}} \int_0^t \big( Z_2^K(s)-z_2^* \big) ds +\widetilde M^K_3(t)
\end{align}
where 
\begin{align}\label{flucz2}
\big( Z_2^K(t)-z_2^* \big) &= \big( Z_2^K(0)-z_2^* \big) + K^{\gga_3-\gga_2}\tau_1 \int_0^t \big( Z_1^K(s)-x_1 \big) ds \nonumber\\ & \quad - K^{\gga_3-\gga_2}\tau_2 \, \int_0^t \big( Z_2^K(s)-z_2^* \big) ds   +\widetilde M^K_{2}(t).
\end{align}

\me  Our goal is to quantify the effect of the first component fluctuations on the dynamics of the third component. Considering the  expressions of the  martingale quadratic variation  \eqref{crochet1}  imposes the choice of the  rescaling parameter   $K^{(1-\gga_3)/2}$ in front of $\big( Z_1^K(t)-x_1 \big) $.  We will see that to keep the effect of the first component on the third component, we need to rescale $\big( Z_2^K(t)- Z_2^K(0)\big) $ by $\frac{K^{(1-\gga_3)/2}}{K^{\gga_3-\gga_2}}$ and $\big( Z_3^K(t)-z_3(t) \big) $ by $K^{(1-\gga_3)/2}$.

\me Let us now state the main theorem of this section.

\begin{theorem} \label{thV}
	Let us define  the three processes $$ \forall t \geq 0, \quad \left\lbrace \begin{array}{lll}
	V_1^K(t)&=K^{(1-\gga_3)/2} \big( Z_1^K(t)-x_1 \big) \\ \\
	V_2^K(t)&= \frac{K^{(1-\gga_3)/2}}{K^{\gga_3-\gga_2}} \big( Z_2^K(t)- Z_2^K(0) \big)  \\ \\
	V_3^K(t)&=K^{(1-\gga_3)/2} \big( Z_3^K(t)-z_3(t) \big)  
	\end{array}\right..$$
	Let us assume that there exists $V_0=(V_0^{(1)},V_0^{(3)})$ a $\bbR^2$-valued random vector such that the sequence $(V_1^K(0),V_3^K(0))_{K \in \bbN^*}$ converges in law to $V_0$ and such that \be
	\label{civz} \displaystyle \sup_{K} \E{ V^K_1(0)^4 }< +\infty \quad ; \quad  \sup_{K} \E{Z_2^K(0)^2} <+\infty.\ee
	\be
	\label{civ3}
	\sup_{K} \E{ \va{V^K_3(0)}} <+ \infty. \ee
	
	Then for all $T>0$, the sequence $(V^K_1,V^K_3)_{K \in \bbN^*}$ converges in law in $\DT{\bbR^2}$ to $(V_1, V_3 )$ such that for all $t$,
	$$V_1(t)=V_0^{(1)} + \sqrt{\tau_1 \, x_1}W_1(t) $$
	$$V_3(t)= V_0^{(3)} + \tau_1 \int_0^t V_1(s) ds - \tau_3 \int_0^t V_3(s) \, ds,$$ where $W_1$ is a standard Brownian motion.
	
\end{theorem}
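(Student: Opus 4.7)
The approach is the classical tightness--identification--uniqueness template, the key manoeuvre being an algebraic substitution that replaces the fast integral $\int_0^t(Z_2^K(s)-z_2^*)\,ds$ by a clean drift in $V_1^K$ plus negligible remainders. First, uniform moment bounds on $V_1^K$, on $Z_2^K(t)-z_2^*$, and on $V_3^K$ follow from hypotheses \eqref{civz}--\eqref{civ3}, Lemma \ref{momentZ}, and standard localization/Gronwall arguments applied to the linear SDEs \eqref{flucz2}--\eqref{flucz3}. The convergence $V_1^K\to V_1$ is then immediate from \eqref{zk1}, which shows that $V_1^K=V_1^K(0)+K^{(1-\gga_3)/2}\widetilde M_1^K$ is a pure martingale past its initial value; by \eqref{crochet1} its quadratic variation equals $\tau_1\int_0^t Z_1^K(s)\,ds$, which tends in probability to $\tau_1 x_1 t$ by the law of large numbers for $Z_1^K$, and jumps are of size $O(K^{-(1+\gga_3)/2})\to 0$. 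Rebolledo's martingale functional CLT therefore yields $V_1^K\Rightarrow V_0^{(1)}+\sqrt{\tau_1 x_1}\,W_1$.

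The heart of the argument is to rewrite $V_3^K$. Multiplying \eqref{flucz3} by $K^{(1-\gga_3)/2}$ and solving \eqref{flucz2} algebraically for $\tau_2\int_0^t(Z_2^K(s)-z_2^*)\,ds$ gives
\begin{align*}
K^{(1-\gga_3)/2}\tau_2\int_0^t(Z_2^K(s)-z_2^*)\,ds
= -V_2^K(t)+\tau_1\int_0^t V_1^K(s)\,ds+K^{(1+2\gga_2-3\gga_3)/2}\widetilde M_2^K(t).
\end{align*}
Substituting back produces
\begin{align*}
V_3^K(t)=V_3^K(0)+\tau_1\int_0^t V_1^K(s)\,ds-\tau_3\int_0^t V_3^K(s)\,ds - V_2^K(t) + R^K(t),
\end{align*}
where $R^K$ gathers the two martingales $K^{(1-\gga_3)/2}\widetilde M_3^K+K^{(1+2\gga_2-3\gga_3)/2}\widetilde M_2^K$ together with the deterministic drift correction $K^{(1-\gga_3)/2-\gga_2}\tau_2\int_0^t Z_2^K(s)\,ds$ coming from the $K^{-\gga_2}$-discrepancy present in \eqref{zk3}.

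The next step is to prove that $V_2^K$ and $R^K$ both tend to zero uniformly in $t\in[0,T]$ in probability. For the two martingales, the quadratic variations computed from \eqref{crochet3} and \eqref{tildeM2} involve only strictly negative powers of $K$ multiplying integrals controlled by Lemma \ref{momentZ}, so Doob's inequality forces uniform convergence to zero. For $V_2^K$, the key input is the fast equilibration of $Z_2^K$ around $z_2^*$: solving \eqref{flucz2} by variation of parameters, the relaxation rate $K^{\gga_3-\gga_2}\tau_2\to\infty$ makes $Z_2^K(t)-z_2^*$ decay exponentially from its initial value and settle at stationary fluctuations of order $K^{-1/2}$, so the prefactor $K^{(1+2\gga_2-3\gga_3)/2}$ dominates only through the very short initial transient and through the stationary noise, both of which vanish thanks to $\gga_3>\gga_2$. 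This is the main technical obstacle: one must exploit carefully the interplay between the rescaling exponents, the fast mixing time, and the moment bound $\sup_K\bbE[Z_2^K(0)^2]<\infty$ to handle the (possibly large) initial gap $Z_2^K(0)-z_2^*$, and similar care yields the vanishing of the $K^{-\gga_2}$ drift correction.

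Once $R^K$ and $V_2^K$ have been shown to be negligible, tightness of $(V_1^K,V_3^K)$ in $\DT{\bbR^2}$ follows from the Aldous--Rebolledo criterion applied to the above semi-martingale decomposition, jumps being of size $O(K^{-(1+\gga_3)/2})$ and $O(K^{-(1+2\gga_2+3\gga_3)/2})$ respectively, hence every limit point is a.s.\ continuous. Passing to the limit along a convergent subsequence and using the joint convergence with $V_1^K$, any limit $(V_1,V_3)$ is continuous and solves the closed linear system
\begin{align*}
V_1(t)=V_0^{(1)}+\sqrt{\tau_1 x_1}\,W_1(t),\qquad V_3(t)=V_0^{(3)}+\tau_1\int_0^t V_1(s)\,ds-\tau_3\int_0^t V_3(s)\,ds.
\end{align*}
Since the equation for $V_3$ is a linear ODE driven by the already identified Gaussian process $V_1$, pathwise uniqueness is immediate, identifying the full weak limit and concluding the proof.
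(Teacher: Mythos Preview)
Your approach mirrors the paper's almost exactly: the same algebraic substitution of \eqref{flucz2} into \eqref{flucz3} to rewrite $V_3^K$ as $V_3^K(0)+\tau_1\int_0^t V_1^K\,ds-\tau_3\int_0^t V_3^K\,ds$ minus $V_2^K$ plus martingale remainders, the same martingale CLT for $V_1^K$, the same use of Doob's inequality to kill the rescaled martingales, and the same Aldous--Rebolledo tightness followed by identification through the linear ODE. One cosmetic difference is that you isolate the $K^{-\gamma_2}$ discrepancy as an additive drift in $R^K$, whereas the paper keeps it as a multiplicative factor $(1+K^{-\gamma_2})$ on the entire substituted integral $\int(Z_2^K-z_2^*)\,ds$, arriving at the cleaner identity \eqref{v3simple}.

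The one place where you diverge in substance is the proof that $\sup_{t\le T}|V_2^K(t)|\to 0$. The paper isolates this as Proposition~\ref{propV2} and proves it by an energy method: apply It\^o's formula to $V_2^K(t)^4$ and then to $V_2^K(t)^2$, and exploit the elementary completing-the-square bound
\[
4\bigl(\tau_1 v_1 v_2 - \tau_2 K^{\gamma_3-\gamma_2} v_2^2\bigr)\ \le\ \frac{\tau_1^2 v_1^2}{\tau_2 K^{\gamma_3-\gamma_2}}
\]
to absorb the cross term into the strong damping, yielding $\bbE[\sup_{t\le T}V_2^K(t)^2]\to 0$. Your variation-of-parameters sketch carries the right intuition (the relaxation rate $\tau_2 K^{\gamma_3-\gamma_2}\to\infty$ forces $V_2^K$ to be small), but as written it is incomplete and slightly misdirected: you argue through the behaviour of $Z_2^K(t)-z_2^*$, yet $V_2^K$ is defined with centering at $Z_2^K(0)$, not at $z_2^*$, and the residual $Z_2^K(0)-z_2^*$ is only $O(1)$ under \eqref{civz}. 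The blanket claim that ``both vanish thanks to $\gamma_3>\gamma_2$'' does not by itself dispose of this static offset once multiplied by $K^{(1+2\gamma_2-3\gamma_3)/2}$. The paper's energy estimate sidesteps this issue by working with $V_2^K$ directly, never passing through the equilibrium shift; adopting that argument closes the gap in your outline.
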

\me
Let us interpret this result in terms of  fluctuations. Assuming that the initial vector $V_0$ is equal to zero, we obtain that for any $t$ and large $K$, 

\be
\label{magnitude}N_3^K(t) \sim K^{1+\gga_2+\gga_3} \,z_3(t \, K^{-\gga_3}) + K^{(1+2\gga_2+3\gga_3)/2} \, V_3(t \, K^{-\gga_3}) \ee
where $$\forall t, \quad V_3(t) = \tau_1 \sqrt{\tau_1 x_1} \int_0^t W_1(s)\, ds - \tau_3 \int_0^t V_3(s)\, ds $$ and $W_1$ is a standard Brownian motion.

\me The order of magnitude   appearing in the  fluctuation second order term  \eqref{magnitude} summarizes the cumulative effects of the third dynamics driven by the fluctuations of the first level. That can explain
the exceptionally large fluctuations observed for the size of terminal cells populations, in hematopoietic systems.

\me As a first step in the proof of Theorem \ref{thV}, we will prove that  the sequence of processes  $(V^K_2)_{K}$ converges to $0$  uniformly in $\mathbb{L}^2$ on any finite time interval.
\begin{prop} \label{propV2}
	Under the assumption \eqref{civz}, we obtain \be
	\label{V2}\forall T \geq 0, \quad  \lim_{K \to \infty} \E{\sup_{t \in [0,T]} V^K_2(t)^2} =0.
	\ee
\end{prop}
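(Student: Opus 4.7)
The strategy is to exploit the strong linear contraction that naturally appears in the SDE for $V_2^K$. Starting from \eqref{flucz2}, subtracting $Z_2^K(0) - z_2^*$ from both sides (using $\tau_1 x_1 = \tau_2 z_2^*$) and multiplying by $K^{(1-\gamma_3)/2}/K^{\gamma_3-\gamma_2}$, the process satisfies
\begin{equation*}
V_2^K(t) \;=\; \tau_1\int_0^t V_1^K(s)\,ds \;-\; \lambda_K\int_0^t V_2^K(s)\,ds \;-\; \tau_2 c_K\,t \;+\; \mathcal{M}_2^K(t),
\end{equation*}
where $\lambda_K := \tau_2 K^{\gamma_3-\gamma_2}$ diverges, $c_K := K^{(1-\gamma_3)/2}(Z_2^K(0)-z_2^*)$, and $\mathcal{M}_2^K := (K^{(1-\gamma_3)/2}/K^{\gamma_3-\gamma_2})\,\tilde M_2^K$ is a square-integrable martingale. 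The central point is that the contraction rate $\lambda_K\to\infty$ is the engine of the proof: it forces $V_2^K$ to track the small equilibrium imposed by the other three terms.

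Since $V_2^K(0)=0$, Duhamel's representation gives
\begin{equation*}
V_2^K(t) = \int_0^t e^{-\lambda_K(t-s)}\bigl[\tau_1 V_1^K(s) - \tau_2 c_K\bigr]\,ds \;+\; \int_0^t e^{-\lambda_K(t-s)}\,d\mathcal{M}_2^K(s),
\end{equation*}
and I would bound each of the three pieces in $\mathbb{L}^2$, uniformly in $t \in [0,T]$. For the $V_1^K$-piece I first establish $\sup_K \mathbb{E}[\sup_{t\le T} V_1^K(t)^2] < \infty$ via Doob applied to the martingale decomposition $V_1^K = V_1^K(0) + K^{(1-\gamma_3)/2}\tilde M_1^K$, using \eqref{crochet1}, Lemma \ref{momentZ} and the fourth-moment hypothesis; the pathwise bound $\int_0^t e^{-\lambda_K(t-s)}ds \le 1/\lambda_K$ then produces an $\mathbb{L}^2$-norm of order $1/\lambda_K$. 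The constant-in-time $c_K$-piece is controlled analogously via $\mathbb{E}[c_K^2] \le K^{1-\gamma_3}\bigl(\sup_K\mathbb{E}[Z_2^K(0)^2] + (z_2^*)^2\bigr)$ combined with the same kernel bound. For the stochastic convolution, a direct computation based on \eqref{tildeM2} yields $\mathbb{E}[\langle\mathcal{M}_2^K\rangle_T] = O(K^{-2\gamma_3}) + O(K^{\gamma_2-2\gamma_3})$, both of which vanish because $\gamma_3>0$ and $\gamma_3>\gamma_2$; Itô's isometry then gives $\mathbb{E}\bigl[\bigl(\int_0^t e^{-\lambda_K(t-s)}d\mathcal{M}_2^K(s)\bigr)^2\bigr] \le \mathbb{E}[\langle\mathcal{M}_2^K\rangle_T]/(2\lambda_K) \to 0$.

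To upgrade from a pointwise $\mathbb{L}^2$-estimate to the supremum statement, I would apply Itô's formula to $(V_2^K)^2$, absorb the cross terms by Young's inequality with constant $\lambda_K$ (so that $-2\lambda_K V_2^K(s)^2$ dominates the positive contributions), apply Gronwall to the resulting integral inequality, and bound the martingale $\int_0^t V_2^K(s-)d\mathcal{M}_2^K(s)$ by BDG combined with the already-controlled $\int_0^t\mathbb{E}[V_2^K(s)^2]ds$. The main technical obstacle is the stochastic convolution: because the kernel $e^{-\lambda_K(t-s)}$ is deterministic one cannot invoke a maximal inequality directly, so the cleanest route is either to rewrite the convolution using the martingale $s\mapsto \int_0^s e^{\lambda_K u} d\mathcal{M}_2^K(u)$ and apply Doob on that, or to handle the supremum via the Gronwall--BDG tandem above. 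A secondary care point is the calibration of the Young splitting: one must ensure that the constants are chosen so the contraction rate $\lambda_K$ absorbs both $V_1^K$ and $c_K$ forcing terms and produces an $1/\lambda_K^2$ prefactor rather than merely $1/\lambda_K$ in front of their $\mathbb{L}^2$-norms.
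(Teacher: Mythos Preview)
Your route is genuinely different from the paper's. The paper never writes down the Duhamel representation; it applies It\^o's formula \emph{twice}, first to $(V_2^K)^4$ to obtain $\sup_K\mathbb{E}\bigl[\int_0^T V_2^K(s)^4\,ds\bigr]<\infty$, and then to $(V_2^K)^2$, controlling the martingale term $\int V_2^K\,d\mathcal R^K$ via Doob and the estimate \eqref{majrk}, which is where the fourth-moment bound is consumed. The contraction is exploited in both approaches through the same completion-of-squares inequality (your Young step is the paper's inequality \eqref{astuceS}). Your Duhamel bound is more transparent for the pointwise-in-$t$ estimate and avoids the detour through fourth moments; conversely, the paper's fourth-moment pass is exactly what makes the supremum step close, so if you carry out your BDG/Gronwall tandem rigorously you will find yourself needing a substitute for it (the density of $\langle\mathcal M_2^K\rangle$ involves $Z_2^K$, not just $ds$).

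There is, however, a genuine gap in your treatment of the $c_K$ piece. Your kernel bound yields a contribution of size $|c_K|/\lambda_K$, hence an $\mathbb L^2$-norm of order $K^{(1-3\gamma_3+2\gamma_2)/2}\,\|Z_2^K(0)-z_2^*\|_2$. Under the standing hypothesis $\gamma_2<\gamma_3<1$ alone, the exponent $1-3\gamma_3+2\gamma_2$ is \emph{not} forced to be negative (take e.g.\ $\gamma_2=0.3$, $\gamma_3=0.4$), so this term does not vanish in general. You have correctly isolated a drift term that the paper's displayed SDE for $V_2^K$ actually omits; but your bound for it does not close without an extra assumption, either on the parameters (e.g.\ $3\gamma_3>1+2\gamma_2$) or on the initial condition (e.g.\ $Z_2^K(0)\to z_2^*$). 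You should flag this explicitly rather than asserting that the kernel bound ``analogously'' handles $c_K$.
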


\begin{proof}
	Using \eqref{zk2}, we obtain that
	\begin{align*}
	\forall t \geq 0, \quad V^K_2(t) &= K^{\frac{1-3\gga_3}{2}+\gga_2} \big( Z^K_2(t) - Z^K_2(0) \big) \\
	& = \tau_1 \int_0^t V_1^K(s) \,ds - \tau_2\,K^{\gga_3-\gga_2}  \int_0^t V^K_2(s) \, ds + \cR^K(t),
	\end{align*}
	where $\cR^K$ is the square-integrable martingale  defined by 
	\begin{equation}
	\label{cr}
	\forall t \geq 0, \quad \cR^K_t = K^{\frac{1-3\gga_3}{2}+\gga_2} \widetilde M^K_2(t)
	\end{equation}
	and satisfying  \begin{equation}
	\label{rk}
	\forall t, \quad \langle \cR^K \rangle_t =\,  K^{-2\gga_3} \Big(\tau_1\,\int_{0}^{t}  Z^K_1(s) \, ds +  \tau_2 \,K^{\gga_2} \int_{0}^{t}  Z^K_{2}(s) \, ds \Big). \end{equation}
	Let us first show that  $$\forall t>0, \quad \sup_{K} \E{ \int_0^t V^K_2(s)^4 \,ds } < \infty.$$
	It\^o's formula immediately implies that $\forall t \geq 0$, 
	\begin{align*}
	V^K_2(t)^4 &=  V^K_2(0)^4 + 4 \, \int_0^t V_2^K(s)^3 \,d\cR^K_s +  4 \, \int_0^t V_2^K(s)^3 \,\big( \tau_1 V_1^K(s) -K^{\gga_3-\gga_2} \tau_2 V^K_2(s)\big) \, ds \\ & \quad \quad + 6 \, \int_0^t V_2^K(s)^2 \,d \langle \cR^K \rangle_s.
	\end{align*}
	
	By standard localization arguments,  we prove using \eqref{civz}  that for any $t\ge 0$,
	\begin{equation} \label{controlv1}
	\forall t \geq 0, \quad \sup_{K} \E{ \int_0^t V^K_1(s)^4 \,ds } < \infty.\end{equation}
	
	Let us now  introduce the stopping time
	$$T_{n}= \inf\{t \ge 0, | V^K_2(t)| \ge n\}.$$
	
	Then, applying the following inequality 	\begin{align} 
	\label{astuceS}
	4 \big( \tau_1 v_1 v_2 - K^{\gga_3-\gga_2} \tau_2 v_2^2 \big) &= 4 \, \tau_2 K^{\gga_3-\gga_2} \big( [\frac{ v_1 \tau_1}{2 \tau_2 \,K^{\gga_3-\gga_2} }]^2 -  [v_2 - \frac{ v_1 \tau_1}{2 \tau_2 \,K^{\gga_3-\gga_2} } ]^2 \big)  \nonumber \\ &\leq  \,  \frac{ v_1^2 \tau_1^2}{ \tau_2 \,K^{\gga_3-\gga_2} }
	\end{align} 
	to $ v_{1} = V_1^K(s)$ and $v_{2}=  V_2^K(s)$,
	we obtain the following upper-bound.
	
	\begin{align}
	\label{majv2}
	\E{V^K_2({t\wedge T_{n}})^4} 
	\leq  \E{V^K_2(0)^4} &+  \, \frac{\tau_1^2}{ \tau_2 \,K^{\gga_3-\gga_2} } \int_0^{t\wedge T_{n}} \E{V_2^K(s)^2 \, V_1^K(s)^2} ds \nonumber \\ &+ 6 \, \E{\int_0^{t\wedge T_{n}} V_2^K(s)^2 \,d\langle \cR^K \rangle_s }.
	%\\	&\leq  \E{V^K_2(0)^4} +  \frac{ \tau_1^2}{  2 \, \tau_2 \, K^{\gga_3-\gga_2} } ( \int_0^t \E{V_2^K(s)^4} \,ds +\int_0^t \E{V_1^K(s)^4} ds)  \\ & \quad \quad + 6 \, \E{\int_0^t V_2^K(s)^2 \,d\langle \cR^K \rangle_s }.
	\end{align}
	Using (\ref{rk}), we obtain for all $t \in [0,T]$,
	$$
	\E{\int_0^{t\wedge T_{n}} V_2^K(s)^2 \,d\langle \cR^K \rangle_s } = K^{\gga_2-2\gga_3} \, \E{\int_0^{t\wedge T_{n}} V_2^K(s)^2 \,  \big( K^{-\gga_2}\tau_1 Z^K_1(s) + \tau_2 Z^K_2(s) \big) \,ds }.
	$$
	Writing  $Z_2^K$ in function of $V_2^K$, we find the following upper bound,
	\begin{align}
	\label{majrk}
	\E{\int_0^{t\wedge T_{n}} V_2^K(s)^2 \,d\langle \cR^K \rangle_s } & \leq  \big(  \tau_1 K^{-2\gga_3} + \tau_2 K^{-(1+\gga_3)/2} +\tau_2 K^{\gga_2-2\gga_3} \big) \int_0^{t\wedge T_{n}} \E{ V_2^K(s)^4 } \, ds \nonumber \\&+ \tau_1 K^{-2\gga_3}  \int_0^{t\wedge T_{n}}\E{Z^K_1(s)^2} \,ds  \\ &+ \tau_2 \big( K^{-(1+\gga_3)/2} + K^{\gga_2-2\gga_3} \big)(\E{Z_2^K(0)^2}+1)\,T. \nonumber
	\end{align}
	We deduce from  \eqref{majv2} using Lemma \ref{momentZ},  \eqref{civz} and  Gronwall's Lemma that $$\forall t \in [0,T], \quad \sup_{K} \E{ \int_0^{t\wedge T_{n}} V^K_2(s)^4 \,ds } < \infty.$$
	
	\bi Let us now come back to the proof of \eqref{V2}.  It\^o's formula yields
	\begin{align*}
	V^K_2(t)^2 =  2 \, \int_0^t V_2^K(s) \,d\cR^K_s +  2 \, \int_0^t V_2^K(s) \,\big( \tau_1 V_1^K(s) -K^{\gga_3-\gga_2} \tau_2 V^K_2(s)\big) \, ds +  \langle \cR^K \rangle_t.
	\end{align*}
	Therefore, using again  (\ref{astuceS}) and Doob's inequality, we obtain
	\begin{align*}
	\E{\sup_{t \in [0,T\wedge T_{n}]}V^K_2(t)^2} &\leq   8 \, \E{\int_0^{T\wedge T_{n}} V_2^K(s)^2 \,d\langle \cR^K \rangle_s}  +   \, \frac{ \tau_1^2}{ \tau_2 K^{\gga_3-\gga_2} } \E{\int_0^{T\wedge T_{n}} V_1^K(s)^2 \, ds} \\ & \quad \quad +  \E{\langle \cR^K \rangle_{T\wedge T_{n}}}.
	\end{align*}
	
	Finally, we deduce from Lemma \ref{momentZ}, \eqref{majrk}, \eqref{civz} 
	and  \eqref{controlv1}   that  for any $K$, $T_{n}$ tends almost surely to $+\infty$  and that 
	$$\forall T \geq 0, \quad  \lim_{K \to \infty} \E{\sup_{t \in [0,T]} V^K_2(t)^2} =0.$$
\end{proof}

\bi Let us now come back to the proof of Theorem \ref{thV}. It has been  inspired by the proof of the main result in \cite{lea2014}.
\begin{proof}[Proof of Theorem \ref{thV}.]
	Using similar convergence arguments as in Theorem \ref{thU} and  \eqref{civz}, we firstly observe that the sequence $(V^K_1)_K$ converges in law in $\DT{\bbR}$ to a continuous process $V_1$ defined by 
	\be
	\label{V1}\forall t, \quad V_1(t)=V_0^{(1)} + \sqrt{\tau_1 x_1} W_1(t)\ee with $W_1$ a standard Brownian motion.
	
	\me Let us recall that from  \eqref{flucz3}, \eqref{flucz2} and \eqref{cr},  that for all $t$,
	$$V^K_3(t) =\, V^K_3(0) +  \tau_2\,  \big(1+\frac{1}{K^{\gga_2}}\big) \, \int_{0}^{t} K^{(1-\gga_3)/2} (Z^K_2(s) -z_2^*)\, ds - \tau_3 \int_{0}^{t}  V^K_3(s) \,ds + K^{(1-\gga_3)/2} \widetilde M^K_3(t)$$
	with $$\tau_2 \int_{0}^{t}  K^{(1-\gga_3)/2} \big(Z^K_2(s)-z^*_2) \, ds = K^{(1-\gga_3)/2} \, \frac{Z^K_2(0) - Z^K_2(t)}{K^{\gga_3-\gga_2}} + \int_{0}^{t} \tau_1\, V^K_1(s) \, ds  + \cR^K_t.$$
	Hence, for all $t$,
	\begin{equation} 
	\label{v3simple}
	V^K_3(t) =\, V^K_3(0) + (1+\frac{1}{K^{\gga_2}}) \, \int_{0}^{t} \tau_1\,V^K_1(s) \, ds - \tau_3 \int_{0}^{t}  V^K_3(s) \,ds - (1+\frac{1}{K^{\gga_2}}) \, V^K_2(t) + \cM^K_t,
	\end{equation}
	where $\cM^K$ is the square-integrable martingale  $$\cM^K(t)= (1+\frac{1}{K^{\gga_2}}) \cR^K_t + K^{(1-\gga_3)/2} \widetilde M^K_3(t).$$
	We deduce from \eqref{crochet3}, \eqref{rk}, Lemma \ref{momentZ} and Doob's inequality, that
	\begin{equation}
	\lim_{K \to \infty} \E{ \sup_{t \in [0,T]} \va{\cM^K(t)}^2}=0.
	\end{equation}
	Then it turns out from Markov's inequality that the sequence $(\cM^K)_K$ converges in probability to $0$ for the uniform norm and hence $(\cM^K)_K$ converges in law in $\DT{\bbR}$ to $0$.\\
	
	Furthermore using  \eqref{v3simple}, \eqref{controlv1}, \eqref{civ3} and Proposition \ref{propV2} we obtain \begin{align*} 
	\sup_{K}\E{\sup_{t \leq T } \va{V^K_3(t)}} < \infty.
	\end{align*}
	We  are now able to prove the tightness of the family $(\displaystyle \sup_{t \leq T } \va{ V^K_3(t) })_{K}$. Indeed, les us introduce 
	stopping times $S$,$S'$ satisfying $S \leq S'\leq (S+\delta)\w T$, with $\delta >0$. Using  (\ref{v3simple}), we have  \begin{align*} 
	\bbP(\va{V^K_3(S')-V^K_3(S)}>\epsilon) &\leq \, \frac{1}{\epsilon} \, \E{ \va{V^K_3(S')-V^K_3(S)}}  \\&\leq \, \frac{1}{\epsilon} \, \E{ \va{V^K_3(S')-V^K_3(S)}^2}^{1/2}  \\ &\leq \,\frac{1}{\epsilon} \, \Big[ \, \delta \, T \, \Big( \tau_1(1+\frac{1}{K^{\gga_2}})  \, (\E{\sup_{t \leq T }V^K_1(t)^2} +1) + \tau_3  \E{\sup_{t \leq T }\va{V^K_3(t)}} \Big) \\ & \quad \quad + 4 \,\E{\sup_{t \leq T } \cM^K(t)^2} + 4\,p^D_2 \,\E{\sup_{t \leq T } V^K_2(t)^2} \, \Big]^{1/2}.
	\end{align*}
	Then from  \eqref{v3simple}, \eqref{controlv1}, \eqref{civ3} and Proposition \ref{propV2}, we deduce that Aldous conditions (see \cite{JoffeMetivier} and \cite{CoursMeleardVincent}) are satisfied and obtain the tightness of $(V^K_3)_K$.
	
	\me Finally, using Proposition \ref{propV2}, the convergence in law in $\DT{\bbR}$ of the processes $\cM^K$ and  $V^K_1$ respectively to  zero and $V_1$ and the convergence in law of $V^K_3(0)$ to $V_0^{(3)}$, we obtain  that  the sequence $(V^K_3)_K$  converges in law in $\DT{\bbR}$  to the process $V_3$, unique solution of the following SDE
	
	$$\forall t \in [0,T], \quad V_3(t)= V_0^{(3)} + \tau_1 \,(1+\frac{1}{K^{\gga_2}}) \,\int_0^t V_1(s) ds - \tau_3 \int_0^t V_3(s) \, ds,$$ where  $V_{1}$ has been defined in \eqref{V1}. That ends the proof.
\end{proof}

\section{Appendix}

\begin{lemme}[Lemma 2.9 of \cite{lea2014}]
	\label{lea}
	Let $V^N$ be a sequence of $\Rp{3}$-valued processes. We consider its   occupation measure defined for $D$ a Borelian set by $$\Gamma_N(D \times [0,t])= \int_0^t \1{D}(V^N(s))\, ds.$$ Let us assume  that there exists a function $\psi : \Rp{3} \to [1,\infty)$ locally bounded such that $\lim_{v \to +\infty} \psi(v)= +\infty$ and such  that for each $t>0$,
	$$\sup_{N} \E{\int_0^t \psi(V^N(s))ds} < +\infty.$$
	Then ${\Gamma_N}$ is relatively compact, and if $\Gamma_N$ converges in law to $\Gamma$, then for $f_1,\dots,f_m \in D_{\psi}$, 
	\begin{align*}
	\big(\int_0^. f_1(V_N(s))& \, ds, \dots , \int_0^. f_m(V^N(s)) \, ds \big) \xrightarrow{\cL}\\
	&\big(\int_{\Rp{3}} f_1(v) \, \Gamma(dv \times [0,.]), \dots , \int_{\Rp{3}} f_m(v) \, \Gamma(dv \times [0,.]) \big)
	\end{align*}
	where $D_{\psi}$ denote the collection of continuous functions $f$ satisfying `\be
	\label{testfunction}\sup_{v \in \Rp{3}} \frac{\va{f(v)}}{\psi(v)}<\infty \quad \text{ and } \lim_{k\to \infty} \sup_{v \in \Rp{3}, \p{v}>k} \frac{\va{f(v)}}{\psi(v)} =0.\ee
\end{lemme}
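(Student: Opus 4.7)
The plan is to apply the standard Kurtz averaging framework in two stages: first prove relative compactness of $(\Gamma_N)_N$ in $l_m(\Rp{3})$ equipped with the weak topology generated by the test functions in $D_\psi$, then promote this convergence to the integrated functionals via a truncation combined with the continuous mapping theorem. The function $\psi$ plays a dual role: it provides a Lyapunov-type spatial control for tightness, and its growth rate encodes exactly which unbounded test functions $f$ the integrals can be taken against.

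For the relative compactness, note that $\Gamma_N([0,t]\times\Rp{3})=t$ is deterministic, so the only issue is spatial tightness. Markov's inequality together with the hypothesis yields
$$\E{\Gamma_N(\{v:\psi(v)>M\}\times[0,t])}\leq \frac{1}{M}\E{\int_0^t \psi(V^N(s))\,ds}\leq \frac{C_t}{M},$$
uniformly in $N$. Since $\psi$ is locally bounded with $\psi(v)\to\infty$ as $\p{v}\to\infty$, the sublevel sets $\{\psi\leq M\}$ are bounded and hence relatively compact in $\Rp{3}$; applying Markov's inequality once more to the nonnegative random variable $\Gamma_N(\{\psi>M\}\times[0,t])$ converts the expectation bound into a tightness statement for the random measures $\Gamma_N$.

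Assume now that a subsequence $\Gamma_{N_k}$ converges in law to $\Gamma$. Fatou's lemma transfers the integrability to the limit: $\E{\int_{\Rp{3}\times[0,t]}\psi(v)\,\Gamma(dv\,ds)}\leq C_t$. For $f\in D_\psi$, fix $\varepsilon>0$ and choose $R$ so that $\va{f(v)}\leq \varepsilon\,\psi(v)$ whenever $\p{v}>R$. Introduce a continuous cutoff $\chi_R$ with $\1{\p{v}\leq R}\leq \chi_R\leq \1{\p{v}\leq 2R}$; the product $f\chi_R$ is bounded, continuous and compactly supported, so the map $\mu\mapsto \int_{\Rp{3}}f\chi_R(v)\,\mu(dv\times[0,\cdot])$ is continuous from $l_m(\Rp{3})$ into $\DT{\bbR}$ and the continuous mapping theorem delivers convergence of the truncated integrals. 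The residual error satisfies
$$\E{\sup_{t\leq T}\va{\int_0^t f(V^{N_k}(s))(1-\chi_R(V^{N_k}(s)))\,ds}}\leq \varepsilon\,\E{\int_0^T \psi(V^{N_k}(s))\,ds}\leq \varepsilon\,C_T,$$
uniformly in $k$, and an analogous bound holds with $\Gamma$ replacing $\Gamma_{N_k}$ thanks to the Fatou estimate above. Letting $\varepsilon\to 0$ yields the one-dimensional convergence, and the joint convergence of the $m$-tuple follows by applying the same cutoff simultaneously to $f_1,\dots,f_m$ and using the continuous mapping theorem on the $\bbR^m$-valued functional.

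The principal obstacle is that the natural weak topology on $l_m(\Rp{3})$ is built for bounded continuous test functions, whereas the conclusion must hold for the unbounded class $D_\psi$. Overcoming this requires that the $\psi$-control survive the passage to the limit, which is precisely the role of the uniform hypothesis $\sup_N \E{\int_0^t\psi(V^N(s))\,ds}<\infty$: combined with the defining decay $\va{f}/\psi\to 0$ at infinity, it allows the truncation error to be made arbitrarily small uniformly in $N$ and in the limit. Once this estimate is established, the remaining verifications--that the normalization $\Gamma([0,t]\times\Rp{3})=t$ is preserved under weak convergence and that the integrated processes live in $\DT{\bbR}$--are routine.
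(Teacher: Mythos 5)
First, a point of comparison: the paper does not prove this lemma at all. It is stated in the appendix purely as an imported result (Lemma 2.9 of Popovic--Kang--Kurtz \cite{lea2014}) and is invoked as a black box in the proof of Theorem \ref{thZ}, so there is no internal proof to compare against. What you have written is a reconstruction of the standard occupation-measure averaging argument from the cited literature, and its structure is the right one: Markov's inequality combined with the boundedness of the sublevel sets of $\psi$ gives the uniform control of mass outside compact sets that characterizes relative compactness in $l_m(\Rp{3})$; and truncation by a compactly supported cutoff $\chi_R$, the continuous mapping theorem, and the uniform-in-$N$ error bound $\varepsilon\,\sup_N\E{\int_0^T\psi(V^N(s))\,ds}$ coming from the decay condition defining $D_\psi$ give the convergence of the integrated functionals, jointly in $m$ coordinates.

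Two steps need tightening. (i) Your Fatou step $\E{\int\psi\,d\Gamma}\le C_t$ is not justified as stated: $\psi$ is only assumed locally bounded, and weak convergence of measures does not give $\liminf_N\int\psi\,d\Gamma_N\ge\int\psi\,d\Gamma$ for a merely measurable integrand (this inequality requires lower semicontinuity). The repair stays entirely within your argument: the only bound you actually need on the limit is $\E{\int \va{f}\,(1-\chi_R(v))\,\Gamma(dv\times[0,T])}\le\varepsilon C_T$, and the function $\va{f}(1-\chi_R)$ \emph{is} continuous and nonnegative, hence lower semicontinuous, so the portmanteau/Fatou inequality applies to it directly after dominating it by $\varepsilon\psi$ on the $\Gamma_{N_k}$ side and passing to the limit. (Alternatively, replace $\psi$ by its lower semicontinuous envelope, which preserves the hypotheses and enlarges $D_\psi$.) (ii) The asserted continuity of $\mu\mapsto\big(\int_{\Rp{3}\times[0,t]}f\chi_R\,d\mu\big)_{t\le T}$ from $l_m(\Rp{3})$ into $\DT{\bbR}$ hides the one place where the normalization $\mu(\Rp{3}\times[0,t])=t$ is essential: it makes these paths uniformly Lipschitz in $t$ with constant $\sup_v\va{f\chi_R(v)}$, which is what upgrades pointwise-in-$t$ convergence (obtained since the time marginal is atomless) to uniform convergence on $[0,T]$; without this remark the continuous mapping step is incomplete, because convergence in $l_m$ is a priori tested only against functions continuous in both variables. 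With these two points made explicit, your proof is correct.
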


%\begin{theorem}[Theorem 7.1.4 of \cite{EthierKurtz}]
%	Let $(M^N)_N$ be a sequence of $\bbR^d$-valued martingales such that $M^N(0)=0$. Let $((A^N_{i,j}))$ be symmetric $d\times d$ matrix-valued processes such that $A^N_{i,j}$ has sample paths in $\bbD([0,T], \bbR)$ and $A^N(t)-A^N(s)$ is nonnegative definite for $t>s \geq0$. Assume  
%	$$\lim_{N \to \infty} \E{\sup_{t \in[0,T]} \va{A^N_{i,j}(t)-A_{i,j}^N(t^-)}} = 0$$
%	$$\lim_{N \to \infty} \E{\sup_{t \in[0,T]} \va{M^N(t)-M^N(t^-)}^2} = 0$$ and for all $i,j$, $$M^N_i(t)M^N_j(t)-A^N_{i,j}(t)$$ is a local martingale.
%	
%	Suppose that there exists $C=((c_{ij}))$ deterministic and continuous such that for all $t \geq0$, $$ A^N_{i,j} \to c_{ij}(t)$$. Then the sequence $(M^N)$ converges in law in $\bbD([0,T],\bbR^d)$ to $M$, where $M$ is Gaussian with independent increments and $\E{M(t)M(t)^T}=C(t)$.
%	
%	Suppose furthermore, there exists a matrix $\sigma$ symmetric such that $\frac{d}{dt}C(t)= \sigma(t)^2$, then $M$ can be written as $$M(t) = \int_0^t \sigma(s) dW(s),$$ where $W$ is $d$-dimensional standard Brownian motion. 
%\end{theorem}
\bi
{\bf Aknowledgments}: We warmly thank Vincent Bansaye, the oncologist St\'ephane Giraudier and the biologist Evelyne Lauret  for exciting and fruitful discussions which have motivated this work. We also thank Vincent Bansaye for his precious comments on our paper. This work was supported by a grant from Région Île-de-France.
%The second author also thanks....

	 \end{document}